\newtheorem{thm}{Theorem}[section]
\newtheorem{lem}[thm]{Lemma}
\newtheorem{prop}[thm]{Proposition}
\newtheorem{cor}[thm]{Corollary}
\newtheorem{exs}[thm]{Examples}
\newtheorem{ex}[thm]{Example}
\theoremstyle{definition}
\newtheorem{defn}[thm]{Definition}
\theoremstyle{remark}
\numberwithin{equation}{section}
\newcommand{\sbq}{\subseteq}
\newcommand{\mc}{\mathcal}
\newcommand{\vide}{\emptyset}
\newcommand{\tbf}{\textbf}
\newcommand{\mbf}{\mathbf}
\newcommand{\mf}{\mathfrak}
\newcommand{\qcl}{\ensuremath{Q_{\text{cl}}\,}}
\newcommand{\inv}{^{-1}}
\newcommand{\e}{\mbf{e}}
\DeclareMathOperator{\spec}{Spec}
\DeclareMathOperator{\minspec}{Min}
\DeclareMathOperator{\ann}{ann}
\DeclareMathOperator{\annb}{ann_{\mbf{B}}}
\DeclareMathOperator{\coz}{coz}
\DeclareMathOperator{\z}{z}
\newcommand{\R}{\mathbf R}
\newcommand{\N}{\mathbf N}
\newcommand{\Z}{\mathbf Z}
\newcommand{\B}{\mathbf B}
\newcommand{\ze}{\text{z}}
\newcommand{\varep}{\varepsilon}
\newcommand{\res}{\raisebox{-.5ex}{$|$}}
\newcommand{\ov}{\overline}
\newcommand{\cx}{\text{C}(X)}
\newcommand{\cxs}{\text{C}^*(X)}
\newcommand{\rr}{\tbf{rr}}
\newcommand{\bd}{\text{\tbf{bd}\,}}
\newcommand{\inte}{\text{\tbf{int}}}
\newcommand{\rro}{\le_{\text{rr}}}
\newcommand{\wrr}{\wedge_{\text{rr}}}
\newcommand{\vrr}{\vee_{\text{rr}}}
\newcommand{\W}{\text{\tbf{W}}}
\begin{document}

 \title[Reduced ring order]{The reduced ring order and lower semi-lattices}

 \author{W.D. Burgess}
\address{Department of Mathematics and Statistics\\ University of Ottawa, Ottawa, Canada, K1N 6N5}
\curraddr{}
\email{wburgess@uottawa.ca}
\thanks{}

\author{R. Raphael}
\address{Department of Mathematics and Statistics\\ Concordia University, Montr\'eal, Canada, H4B 1R6}
\curraddr{}
\email{raphael@alcor.concordia.ca}
\thanks{The authors would like to thank the referee for a thorough reading and for some helpful suggestions.}

\subjclass[2010]{06F25\ 16W80 46E25}
\dedicatory{\emph{Preprint Version}}
\date{}

\begin{abstract} Every reduced ring $R$ has a natural partial order defined by $a\le b$ if $a^2=ab$; it generalizes the natural order on a boolean ring. The article examines when $R$ is a lower semi-lattice in this order with examples drawn from weakly Baer rings (pp-rings) and rings of continuous functions. Locally connected spaces and basically disconnected spaces give rings $\cx$ which are such lower semi-lattices. Liftings of countable orthogonal (in this order) sets  over a surjective ring homomorphisms are studied. 
\end{abstract}

\maketitle

\setcounter{section}{1}\setcounter{thm}{0}
\noindent\tbf{1. Introduction.} The subject of this article is an aspect of  the \emph{reduced ring order} (here called the
 \rr-order). This partial order is defined in all reduced rings (rings with no non-trivial nilpotent elements). The global assumption in the sequel will be that \emph{the rings considered are unitary and reduced} and, for some parts, also commutative. If $R$ is a reduced ring the set of idempotents of $R$, which are here all central, is denoted $\B(R)$. A \emph{regular ring} will always mean \emph{von Neumann regular}.
\begin{defn}\label{defrr} Let $R$ be a reduced ring then for $a,b\in R$, $a\rro b$ if $ab=a^2$. \end{defn} 
This is readily seen to be a partial order which has been called variously the \emph{Abian order} and the \emph{Conrad order} (see, for example, \cite{BR} which dealt with orthogonal completions in this order).  In a reduced ring $R$, if $a^2=ab$ then, also, $a^2=ba$. This order generalizes to all reduced rings the natural order on boolean rings. 

In a reduced ring $R$ and $r\in R$ the left and right annihilators of $r$ coincide and are denoted $\ann r$. The following well-known property of reduced rings, obvious in the commutative case,   will be used often. It was first shown in \cite{AR} and \cite{St} and is found in e.g.~\cite[Lemma~12.6]{L}. 

\begin{lem}\label{c.prime} If $R$ is a reduced ring and $\mf{p}$ is a minimal prime ideal of $R$ then $\mf{p}$ is completely prime, i.e., $R/\mf{p}$ is a domain. \end{lem}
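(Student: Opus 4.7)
Plan. The plan is to prove the lemma using two standard tools from noncommutative ring theory: (1) the reducedness trick that turns one-sided annihilators into two-sided ones, and (2) Krull's $m$-system characterization of minimal primes.

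First, in a reduced ring $R$, $ab=0$ implies $aRb=0$: from $(ba)^{2}=b(ab)a=0$ reducedness gives $ba=0$, and then for any $r\in R$, $(arb)^{2}=ar(ba)rb=0$ gives $arb=0$. A pleasant consequence is that if $\mathfrak{p}$ is any prime ideal of $R$ with reduced quotient $R/\mathfrak{p}$, then $\mathfrak{p}$ is already completely prime: a zero product $\bar{a}\bar{b}=0$ in the reduced prime $R/\mathfrak{p}$ yields $\bar{a}\bar{R}\bar{b}=0$ by the trick, and primality of $\mathfrak{p}$ then forces $\bar{a}=0$ or $\bar{b}=0$. The lemma therefore reduces to showing that $R/\mathfrak{p}$ is reduced whenever $\mathfrak{p}$ is a minimal prime of reduced $R$.

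For this reducedness of $R/\mathfrak{p}$, I would invoke Krull's theorem: $\mathfrak{p}$ is a minimal prime if and only if $S:=R\setminus\mathfrak{p}$ is maximal among the $m$-systems of $R$ that avoid $0$. Suppose for contradiction that some $x\in R\setminus\mathfrak{p}$ has $x^{n}\in\mathfrak{p}$; replacing $x$ by a suitable power we may take $n=2$. If $xRx\subseteq\mathfrak{p}$, then semiprimeness of $\mathfrak{p}$ already gives $x\in\mathfrak{p}$, contradicting our choice; otherwise some $y:=xrx$ lies outside $\mathfrak{p}$ and again satisfies $y^{2}=xr(x^{2})rx\in\mathfrak{p}$. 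Iterating this construction produces a family of such elements, which one then uses, in conjunction with the reducedness trick, to build an $m$-system strictly containing $S$ and still avoiding $0$, contradicting the maximality of $S$.

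The main technical obstacle is this final construction: verifying that the enlarged $m$-system really avoids $0$ requires careful bookkeeping, transforming any hypothetical vanishing product into a two-sided ideal inclusion landing in $\mathfrak{p}$ via repeated application of the reducedness trick $ab=0\Rightarrow aRb=0$; primality of $\mathfrak{p}$ then forces one of the $x$-factors or a factor from $S$ into $\mathfrak{p}$, which is the sought contradiction. This combinatorial step is the heart of the argument and is carried out in detail in the standard noncommutative ring theory literature, notably in \cite{L}; the outline above captures the strategy while delegating the iteration to that reference.
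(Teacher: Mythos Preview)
The paper supplies no proof of this lemma; it merely records the statement as well known and cites \cite{AR}, \cite{St}, and \cite[Lemma~12.6]{L}. There is thus nothing in the paper to compare your argument against beyond that bare citation.

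Your reduction is correct and worth recording: the identity $ab=0\Rightarrow ba=0\Rightarrow aRb=0$ holds in any reduced ring, and from it one sees at once that a prime ideal with reduced quotient is automatically completely prime. So it does suffice to show that $R/\mf p$ is reduced when $\mf p$ is a minimal prime, and the characterisation of minimal primes via maximal $m$-systems avoiding $0$ is the right tool.

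Your sketch of that remaining step, however, becomes unclear at the crucial point. The elements you construct, namely $x$, $y=xrx$, and their iterates, are all chosen \emph{outside} $\mf p$ and hence already lie in $S=R\setminus\mf p$; so the phrase ``build an $m$-system strictly containing $S$'' does not match what has been produced, since nothing new has been adjoined to $S$. Presumably the intention is to adjoin something from $\mf p$ (for instance $x^{2}$ itself) to $S$ and then argue, via reducedness, that the $m$-system so generated still avoids $0$; but the text does not actually say this. Since you then explicitly hand the combinatorial heart of the argument to \cite{L}, your proposal and the paper end up in the same place: both rest on citing Lam, with your version adding a partial road map on top of the citation.
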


The \rr-order can be a very coarse one but there are important classes of rings where it plays an interesting role. To illustrate one extreme, if $R$ is a domain, then 0 is the unique minimal element in the order and no other pairs of elements are related. In fact in any reduced ring 0 is the unique minimal element while non zero-divisors are maximal. At the other extreme, a reduced ring $R$ is a lattice in the \rr-order only when $R$ is boolean (Proposition~\ref{bool}) but the main question asked below is when a reduced ring is a lower semi-lattice in this order  (\rr-lower semi-lattice). Here there are many rings where the answer is positive. Moreover, some classes of rings are characterized by this property, for example weakly Baer rings within almost weakly Baer rings (Theorem~\ref{awB}). It is shown that if $R$ is a commutative reduced ring it can be embedded, as an essential extension, in a weakly Baer ring; this extension is proper if $R$ is not already weakly Baer (Proposition~\ref{embedwB} and Corollary~\ref{W2=W}).

If $R\to S$ is a surjective homomorphism of reduced rings when can a countable orthogonal family (in the \rr-order) in $S$ be lifted to an orthogonal family in $R$? This is answered in two cases, Theorem~\ref{liftwb} and Proposition~\ref{liftcx}. 

Among rings of continuous functions, two quite different classes of topological spaces yield \rr-lower semi-lattices, locally connected spaces (Theorem~\ref{locc}) and basically disconnected spaces (\cite[Example~3.2]{NR}). A characterization of these spaces is available  (Theorem~\ref{finfg})  but requires looking at each pair of functions. However, even if $X$ is a compact, metric and connected space it does not follow that $\cx$ is an \rr-lower semi-lattice (Theorem~\ref{locc}(2)). 

Consider the case of $R=\prod_{\alpha\in A}D_\alpha$ where each $D_\alpha$ is a domain. Here, any pair of elements has an infimum, noted $\wrr$. Let $(a_\alpha)$ and $(b_\alpha)$ be elements of $R$. Define $(c_\alpha)\in R$ by $c_\alpha = a_\alpha$ when $a_\alpha = b_\alpha$ and $c_\alpha =0$ when $a_\alpha\ne b_\alpha$. It is readily seen that $(c_\alpha) =(a_\alpha)\wrr (b_\alpha)$. 

Such computations are not readily available in a general reduced ring $R$. The question can be posed in the following way. Embed $R$ in $S=\prod_{\mf{p}\in \minspec R}R/\mf{p}$, where $\minspec R$ is the minimal spectrum.  Given $a,b\in R$ then $c=a\wrr b$ can be found in $S$ but that element need not be in $R$. What is the ``best approximation'' to $c$ which can be found in $R$? This question will be considered in the sequel, sometimes directly, sometimes indirectly. See Corollary~\ref{orthog}, below for an easy case. It is also shown that if $|\minspec R| \le 3$ then $R$ is an \rr-lower semi-lattice but not necessarily for more than three minimal prime ideals (Proposition~\ref{Goldie}). \\[-.5ex]

\setcounter{section}{2}\setcounter{thm}{0}
\noindent\tbf{2. The reduced ring order and lower semi-lattices.} Since the \rr-order on a domain $R$ is very easy to understand and makes $R$ into a lower semi-lattice, the first place to look for examples is where rings are built out of domains in a straightforward way.

Recall (\cite[Chapter~V2]{J}) that a ring $R$ can be represented as the ring of sections of a sheaf, called the \emph{Pierce sheaf} over the boolean space $\spec \B(R)$, where $\B(R)$ is the boolean algebra of central idempotents of $R$. (In a reduced ring, all the idempotents are central.) The stalks, for $x\in \spec\B(R)$, are the rings $R_x = R/Rx$. For $a\in R$, put $a_x= a+Rx$ and define $\ze  (a)=\{x\in \spec\B(R)\mid a_x=0\}$, an open set in $\spec\B(R)$. Its complement is $\coz a$. 

An interesting case in this study is where the stalks of this representation are all domains. For a reduced ring $R$, what is wanted is that the $Rx$, as above, are precisely the minimal prime ideals of $R$. In the commutative case these rings were characterized in \cite[Theorem~2.2]{NR} as the rings $R$ in which the annihilator of an element is generated by idempotents; these rings are called \emph{almost weakly Baer rings}.   The name will here be abbreviated to \emph{awB rings}. For the purposes of this article, the definition will be extended to \emph{all reduced rings}. The awB rings include commutative (Von Neumann) regular rings and, more generally, strongly regular rings, and Baer rings, among others.  The characterization of rings whose Pierce stalks are domains extends readily to general reduced rings as shown in Proposition~\ref{NRgen}.

The next proposition gives some information about the \tbf{rr}-order.  If a reduced ring $R$ is viewed as a subring of $\prod_{\mf{p}\in \minspec R}R/\mf{p}$, a product of domains, and $r,s\in R$, to ask about $r\wrr s$ is to ask to when there is an element  $t\in R$ which is zero when $r+\mf{p}$ and $s+\mf{p}$ differ, coincides with them when $t+\mf{p}\ne 0$ and is the unique element \rr-maximal among all such elements.

\begin{prop} \label{lsl}  Let $R$ and $S$ be reduced rings. (i)~Let  $a, b,c\in R$ where $c$ commutes with $a$: if $a\rro b$ then $ca\rro cb$ and, in particular, if $e=e^2$ then $ae\rro be$. 

(ii)~If $a,b\in R$, $c= a\wrr b$ and $u$ is a central unit in $R$, then $uc = ua\wrr ub$. 

(iii)~If  $\phi \colon R\to S$ is a ring homomorphism then $\phi$ preserves the \tbf{rr}-order on $R$. 

(iv)~If $R$ and $S$ are \rr-lower semi-lattices and $\phi\colon R\to S$ a surjective homomorphism with kernel $K$, where $K$ is generated by idempotents, then $\phi$ preserves the lower semi-lattice structure. \end{prop}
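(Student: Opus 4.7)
Parts (i)--(iii) reduce to short direct verifications. For (i), given $ab=a^2$ and $ca=ac$, one computes $(ca)(cb)=c(ac)b=c(ca)b=c^2ab=c^2a^2=(ca)^2$; the idempotent case follows because $e$ is automatically central in a reduced ring. Part (ii) is then immediate: $uc\rro ua$ and $uc\rro ub$ by (i), and conversely any lower bound $d$ of $ua$ and $ub$ satisfies $u^{-1}d\rro a$ and $u^{-1}d\rro b$ (by (i) applied with the central unit $u^{-1}$), whence $u^{-1}d\rro c$ and $d\rro uc$. Part (iii) is the observation that $\phi(a)\phi(b)=\phi(a)^2$ whenever $ab=a^2$.

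The heart of the argument is (iv). Set $c=a\wrr b$ in $R$; by (iii), $\phi(c)\rro\phi(a)$ and $\phi(c)\rro\phi(b)$ in $S$. To show $\phi(c)$ is the infimum, I would pick any $s\in S$ with $s\rro\phi(a)$ and $s\rro\phi(b)$, and use surjectivity to lift $s$ to $r\in R$. The two relations on $s$ translate to $r^2-ra\in K$ and $r^2-rb\in K$. The idea is to modify $r$ by an element of $K$ so that the modified element is a lower bound for $a$ and $b$ in $R$ while still projecting to $s$.

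The key step exploits that $K$ is generated by idempotents, all of them central since $R$ is reduced. Write $r^2-ra$ and $r^2-rb$ as finite $R$-linear combinations of such generating idempotents $e_1,\dots,e_n\in K$ and set $f=1-\prod_{i=1}^n(1-e_i)$. Then $f$ is a central idempotent in $K$ with $fe_i=e_i$ for each $i$, so $f(r^2-ra)=r^2-ra$ and $f(r^2-rb)=r^2-rb$. Now put $r'=r(1-f)$. Since $f\in K$ we have $\phi(r')=\phi(r)=s$, and the centrality of $f$ together with the absorbing property gives $(r')^2-r'a=(1-f)(r^2-ra)=0$, and similarly $(r')^2-r'b=0$. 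Thus $r'\rro a$ and $r'\rro b$ in $R$, so $r'\rro c$, and applying (iii) yields $s=\phi(r')\rro\phi(c)$ in $S$, which is exactly what is needed.

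The main obstacle is precisely this construction of the absorbing idempotent $f\in K$; everything else is formal. Without the hypothesis that $K$ is generated by idempotents there is no systematic way to kill the two ``errors'' $r^2-ra$ and $r^2-rb$ simultaneously by a single modification within $R$, which is what singles that hypothesis out as essential.
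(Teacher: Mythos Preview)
Your proof is correct and follows essentially the same route as the paper. The only cosmetic difference in part (iv) is that the paper writes each error as a single product $t^2-ta=er$, $t^2-tb=fr'$ with $e,f\in K$ idempotent and then passes to $g=(1-e)(1-f)$, whereas you build one absorbing idempotent $f$ from a finite list of generators for both errors at once; these constructions amount to the same thing, and your version makes the justification for ``there is such an idempotent'' a bit more explicit.
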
 

\begin{proof} Notice that if $a^2=ab$ then, also, $a^2=ba$ since $(ab-ba)^2=0$.

(i) Clearly $a^2= ab$ implies $(ca)^2 = cacb$. Moreover, idempotents are central in $R$.

(ii)  By (i), $uc\rro ua$ and $uc\rro ub$. Suppose $d\rro ua$ and $d\rro ub$, then $u\inv d\rro a$ and $u\inv d\rro b$. Hence, $u\inv d\rro c$ giving $d\rro uc$ and $uc= ua\wrr ub$. 

(iii) Obvious.

(iv) If $a, b\in R$ and $c=a\wrr b$ it must be shown that $\phi(c) = \phi(a)\wrr \phi(b)$. Put $s= \phi(a)\wrr \phi(b)$. Clearly $\phi(c)\rro s$, using (iii). Pick $t\in \phi\inv(s)$. There exist idempotents  $e, f\in K$ and $r, r'\in R$ so that $t^2-ta= er$ and $t^2-tb = fr'$. Let $g =(1- e)(1-f)$. It follows that $(t^2-ta)g =0 =(t^2-tb)g$ which implies $(tg)^2= tga$ and $tg\rro a$ and, similarly, $tg\rro b$. Thus, $tg\rro c$. Note that $\phi(g)=1$. Applying $\phi$ gives $\phi(tg)= \phi(t)= s\rro \phi(c)$. This shows that $\phi(c) = s$. \end{proof}

\begin{cor} \label{orthog} Let $R$ be a reduced ring and $\mf{P}(R) = \prod_{\mf{p}\in \minspec}R/\mf{p}$. If for $a,b\in R$, $a\wrr b=0$ in $\mf{P}(R)$ then $a\wrr b=0$ in R. In particular, if $ab=0$ then $a\wrr b=0$ in both rings. \end{cor}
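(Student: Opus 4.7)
The plan is to use the canonical embedding $R\hookrightarrow \mf{P}(R)$, which is injective because $R$ is reduced (the intersection of minimal primes is $0$). By Proposition~\ref{lsl}(iii), this inclusion preserves the \rr-order, so any lower bound of $a$ and $b$ in $R$ remains a lower bound of $a$ and $b$ in $\mf{P}(R)$.

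For the first assertion, suppose $a\wrr b=0$ in $\mf{P}(R)$ and pick $c\in R$ with $c\rro a$ and $c\rro b$. Viewing $c$ inside $\mf{P}(R)$, the above remark gives $c\rro a\wrr b=0$ in $\mf{P}(R)$, i.e.\ $c^2=c\cdot 0=0$. Since $\mf{P}(R)$ is a product of domains and hence reduced, this forces $c=0$. Thus $0$ is the only common lower bound of $a$ and $b$ in $R$, so $a\wrr b=0$ in $R$.

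For the particular case, assume $ab=0$. I need only verify that $a\wrr b=0$ in $\mf{P}(R)$, after which the first part finishes the argument. Recall from the introduction that the meet in a product of domains is computed coordinatewise: at $\mf{p}\in \minspec R$ it equals $a+\mf{p}$ if $a+\mf{p}=b+\mf{p}$ and $0$ otherwise. Now $ab\in\mf{p}$ for every minimal prime $\mf{p}$, and by Lemma~\ref{c.prime} the ideal $\mf{p}$ is completely prime, so $a\in\mf{p}$ or $b\in\mf{p}$. In either case at least one of $a+\mf{p}, b+\mf{p}$ is $0$, so the coordinate of $a\wrr b$ at $\mf{p}$ vanishes; this is so for every $\mf{p}$, giving $a\wrr b=0$ in $\mf{P}(R)$ as required.

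There is no real obstacle here: the argument is essentially a transfer of the infimum along an order-preserving injection into a ring where meets are easy to describe, combined with the completely-prime property of minimal primes in reduced rings to handle the case $ab=0$.
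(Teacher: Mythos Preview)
Your proof is correct and follows essentially the same approach as the paper: both invoke Proposition~\ref{lsl}(iii) together with the explicit description of infima in $\mf{P}(R)$ as a product of domains. You have simply made explicit what the paper leaves to the reader, including the appeal to Lemma~\ref{c.prime} for the $ab=0$ case.
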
 

\begin{proof} This follows immediately from Proposition~\ref{lsl}~(iii) and how infs are calculated in $\mf{P}(R)$. \end{proof}

In Proposition~\ref{lsl}~(iv) the hypotheses on $\phi$ are required. If, for example, $\phi$ is a surjective homomorphism from a domain $R$ onto a domain $S$ with non-zero kernel and if $a \ne b$ are not in $\ker \phi$ with $\phi (a) = \phi (b)$,  then $a\wrr b=0$ but $\{a,b\}$  is sent to $\{\phi(a)=\phi(b)\}$ and $\phi(a)\wrr \phi(b)=\phi(a)\ne 0$.

In what follows the case of a not necessarily commutative reduced ring $R$ which is a Pierce sheaf of domains will be examined. To simplify notation, for $a \in R$, $\annb a = \ann a \cap \B(R)$.  The awB rings are those for which $\ann a = (\annb a)R$.  A \emph{weakly Baer ring} (wB ring) is a reduced ring such that for each $r\in R$ there is $e\in \annb r$ such that $\ann r = Re$. This means that for $x\in \spec \B(R)$, $r_x=0_x$ if and only if $e\notin x$. (Such rings are also called \emph{pp-rings}, for example in \cite{B}, or \emph{Rickart rings} in \cite{BPR}.) 

To simplify notation the following definition is included.

\begin{defn} \label{rr-lower}  A reduced ring $R$ which is an \rr-lower semi-lattice is called \emph{\rr-good}. If, for $r,s\in R$, $r\wrr s=0$, the elements $r$ and $s$ are called \rr-orthogonal.\end{defn}

The property of being \rr-orthogonal is what is called a \emph{Pierce property}.

\begin{lem}\label{Pprop} Let $R$ be a reduced ring. Then, for $r,s \in R$, $r\wrr s=0$ if and only if for all $x\in \spec \B(R)$, $r_x\wrr s_x =0_x$.    \end{lem}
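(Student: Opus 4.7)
The plan is to prove both implications using two standing facts: the Pierce embedding $R \hookrightarrow \prod_{x \in \spec \B(R)} R_x$ is injective, and every ring homomorphism preserves the \rr-order (Proposition~\ref{lsl}(iii)). The easy direction is the ``if.'' Assuming $r_x \wrr s_x = 0_x$ for every $x$, any common \rr-lower bound $t$ of $r$ and $s$ in $R$ projects to a common \rr-lower bound $t_x$ of $r_x$ and $s_x$ in each stalk, which must therefore be $0_x$. Injectivity of the Pierce embedding then forces $t = 0$, giving $r \wrr s = 0$.

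For the forward direction, fix $x \in \spec \B(R)$ and let $t \in R$ represent a common \rr-lower bound of $r_x$ and $s_x$. I would first rewrite the stalk relations $(t_x)^2 = t_x r_x$ and $(t_x)^2 = t_x s_x$ as $t^2 - tr \in Rx$ and $t^2 - ts \in Rx$. Because $x$ is an ideal of the Boolean algebra $\B(R)$ whose elements are closed under finite joins, every element of $Rx$ has the form $ye$ for some $y \in R$ and some $e \in x$; hence there exist $e, f \in x$ with $(t^2 - tr)(1-e) = 0$ and $(t^2 - ts)(1-f) = 0$. Setting $g = e \vee f \in x$ and $h = 1-g \notin x$, centrality of $h$ yields $(th)^2 = t^2 h = trh = (th)r$ and similarly $(th)^2 = (th)s$, so $th$ is a common \rr-lower bound of $r$ and $s$ in $R$ itself.

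The hypothesis $r \wrr s = 0$ in $R$ now forces $th = 0$. Since $h = 1 - g$ with $g \in x$, we get $t = tg \in Rx$, so $t_x = 0_x$. As $t$ was an arbitrary representative of an arbitrary common \rr-lower bound in the stalk, this proves $r_x \wrr s_x = 0_x$, completing the converse.

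The main obstacle is exactly this lifting step. The stalk-level relations produce two separate ``error idempotents'' $e$ and $f$, and the crucial move is to use the Boolean-algebra structure on $\B(R)$ to amalgamate them into a single multiplier $h \notin x$ that simultaneously corrects both relations, so that the global hypothesis $r \wrr s = 0$ can be brought to bear.
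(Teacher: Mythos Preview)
Your proof is correct and follows essentially the same route as the paper's. Both directions match: the backward implication uses injectivity of the Pierce embedding, and for the forward implication both arguments take a stalk-level lower bound, multiply its representative by a single idempotent $h\in\B(R)\setminus x$ (equivalently $h=1-g$ with $g\in x$) chosen to clear the two defect terms simultaneously, obtain a global \rr-lower bound, and conclude it vanishes. Your version is simply more explicit about amalgamating the two ``error'' idempotents via $g=e\vee f$, whereas the paper asserts the existence of such an $e$ in one line.
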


\begin{proof}  Suppose first that $r\wrr s=0$. If, for some $x\in \spec \B(R)$, there is $a\in R$ with $0_x\ne a_x\rro r_x $ and $a_x\rro s_x$ in $R_x$. Then there is $e\in \spec \B(R)\setminus x$ such that $a^2e= aer=aes$ and $ae\ne 0$, a contradiction. Suppose now that $r_x$ and $s_x$ are \rr-orthogonal in each stalk. If there is $0\ne a$ with $a^2=ar = as$ then, for some $x\in \spec \B(R)$, $a_x\ne 0_x$ leading to a contradiction.  \end{proof}

Before going on to characterize the \rr-good rings among the awB rings it will be shown that the Niefield-Rosenthal characterization of commutative rings whose Pierce stalks are domains extends readily to all reduced rings.

\begin{prop}\label{NRgen}  Let $R$ be a reduced ring. Then each of the stalks of the Pierce sheaf of $R$ is a domain if and only if $R$ is an awB ring. \end{prop}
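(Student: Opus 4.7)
The plan is to follow the Niefield--Rosenthal argument from the commutative case, replacing each use of commutativity by the standard reduced-ring facts that left and right annihilators coincide and that all idempotents are central. The dictionary to keep in hand is: for $r\in R$ and $x\in\spec\B(R)$, $r_x=0_x$ iff $re=r$ for some $e\in x$ iff $r(1-e)=0$ for some $1-e\notin x$; equivalently, $r_x=0_x$ iff $\annb(r)\not\sbq x$. Furthermore, since $\annb(b)$ is closed under finite boolean joins, the ideal $\annb(b)R$ has the useful property that $s\in\annb(b)R$ iff $sg=s$ for some single $g\in\annb(b)$ (take $g$ to be the join of the idempotents appearing in any representation of $s$).

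For the direction \emph{awB implies each Pierce stalk is a domain}, fix $x\in\spec\B(R)$ and take $a,b\in R$ with $(ab)_x=0_x$. Pick $e\in\annb(ab)\setminus x$; centrality of $e$ turns $abe=0$ into $(ae)b=0$, so $ae\in\ann(b)=\annb(b)R$, and hence there is $g\in\annb(b)$ with $aeg=ae$. If $g\notin x$ then $b_x=0_x$. Otherwise $g\in x$, so $g_x=0_x$; since $1-e\in x$ gives $e_x=1_x$, passing the equation $aeg=ae$ to $R_x$ yields $a_x=0_x$. Either way $R_x$ has no zero-divisors.

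For the direction \emph{each stalk a domain implies $R$ is awB}, take $r\in\ann(a)$; the containment $\annb(a)R\sbq\ann(a)$ is automatic, so only the reverse needs work. In each stalk the relation $r_xa_x=0_x$ forces $r_x=0_x$ or $a_x=0_x$, so $\spec\B(R)=\ze(a)\cup\ze(r)$. Each of $\ze(a)$ and $\ze(r)$ is the union of the basic clopens $\{x:f\notin x\}$ as $f$ ranges over $\annb(a)$, respectively $\annb(r)$. By compactness of the Stone space $\spec\B(R)$ the cover refines to a finite one; taking boolean joins (using that $\annb(a)$ and $\annb(r)$ are closed under joins) produces $e\in\annb(a)$ and $f\in\annb(r)$ with $e\vee f=1$ in $\B(R)$. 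Setting $g=f(1-e)\in\annb(r)$ gives $eg=0$ and $e+g=1$, whence $r=re+rg=re\in\annb(a)R$.

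The only genuine obstacle is making sure the non-commutative reductions are legitimate, and they are, thanks to centrality of idempotents and the two-sidedness of annihilators in a reduced ring (both recorded just before Proposition~\ref{lsl}). Once those are in hand, the compactness-plus-joins step in the second direction is a standard Pierce-sheaf maneuver and the first direction reduces to a stalk-by-stalk annihilator calculation; so the extension from the commutative Niefield--Rosenthal result is essentially formal.
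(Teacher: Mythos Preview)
Your proof is correct and follows essentially the same route as the paper's: both directions use the Pierce dictionary $r_x=0_x\iff\annb(r)\not\sbq x$, apply the awB hypothesis to land in an idempotent-generated annihilator, and invoke compactness of $\spec\B(R)$ for the converse. The only cosmetic differences are that you package the finite sum of idempotents as a single boolean join and, in the converse, cover all of $\spec\B(R)$ with clopens from both $\annb(a)$ and $\annb(r)$ rather than covering just $\coz s$ by clopens from $\annb(r)$ alone.
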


\begin{proof} Recall the notation for the Pierce sheaf: the base space is $\spec \B(R)$ and the stalks are the rings $R/Rx$, $x\in \spec\B(R)$. Suppose first that $R$ is awB. If for $r,s\in R$ and $x\in \spec \B(R)$, $r_x \ne 0_x$ and $(rs)_x=0_x$ then there is $e\in \B(R)\setminus x$ such that $rse=0$. Hence, by the awB property, there are idempotents $e_1,\ldots, e_k\in \ann r$ and $t_1,\ldots, t_k\in R$ such that $se = \sum_{i=1}^k t_ie_i$. However, the $e_i\in x$ showing that $se\in Rx$ and, thus, $(se)_x=s_x =0_x$ because $e_x=1_x$. Hence, $R_x$ is a domain.

In the other direction, if each $R_x$ is a domain then, for $r\in R$, if  $s\in \ann r$ then $s_x=0_x$ if $r_x\ne 0_x$. Hence, $\coz s \sbq \z(r)$, where $\ze(r)$ is an open set. Since $\z(r)$ is a union of clopen (closed and open) sets of the form $\coz e$, $e\in \B(R)$ and $er=0$.  The compactness of the closed set $\coz s$ implies that $\coz s$ is covered by a finite set of the $\coz e$. The union of this finite collection is also clopen, say  $\coz f$, $f\in \B(R)$ and $f\in \ann r$. From this, $s = sf$. Hence, $\ann r= (\annb r)R$.
   \end{proof}
  
\begin{thm} \label{awB} Let $R$ be an awB ring. Then $R$ is \rr-good  if and only if $R$ is a wB ring. \end{thm}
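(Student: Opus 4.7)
The plan is to handle the two directions separately, with the $(\Leftarrow)$ direction producing an explicit formula for infima and the $(\Rightarrow)$ direction producing an idempotent generator of $\ann r$ by a well-chosen infimum.

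For $(\Leftarrow)$, assume $R$ is wB. Given $a,b\in R$, apply the wB property to the difference: take the idempotent $e$ with $\ann(a-b)=Re$. Then $e(a-b)=0$ and, since $e$ is central, $ea=eb$. I would propose $c:=ea=eb$ as the candidate for $a\wrr b$. Checking $c\rro a$ reduces to $c^2=ca$, a one-line computation using $e^2=e$ and centrality; the inequality $c\rro b$ is symmetric. For the universality, suppose $d\rro a$ and $d\rro b$: then $d^2=da$ and $d^2=db$ give $d(a-b)=0$, so $d\in\ann(a-b)=Re$ and $d=de$. Now $dc=d(ea)=(de)a=da=d^2$, giving $d\rro c$, so $c=a\wrr b$. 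This shows $R$ is \rr-good.

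For $(\Rightarrow)$, assume $R$ is awB and \rr-good. Fix $r\in R$. The key trick is to form the infimum $e:=1\wrr(1-r)$, which exists by hypothesis. From $e\rro 1$ we have $e^2=e\cdot 1=e$, so $e\in\B(R)$. From $e\rro 1-r$ we have $e^2=e(1-r)=e-er$, so $er=0$; thus $e\in\annb r$. To see $e$ is the \emph{largest} element of $\annb r$, take any $f\in\annb r$: then $f=f^2=f\cdot 1$ shows $f\rro 1$, while $f(1-r)=f-fr=f=f^2$ shows $f\rro 1-r$, so by the infimum property $f\rro e$, i.e.~$f^2=fe$, hence $f=fe\le e$ in $\B(R)$. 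Since $R$ is awB, we may now write $\ann r=(\annb r)R$, and because every $f\in\annb r$ factors through $e$, this ideal equals $Re$. Thus $R$ is wB.

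There is no serious obstacle; the whole proof rests on noticing that for any $r$, an idempotent generator of $\ann r$ (when it exists) must be the maximum of $\annb r$, and that this maximum is precisely captured by the infimum $1\wrr(1-r)$ in the \rr-order, since being \rr-below $1$ forces idempotency and being \rr-below $1-r$ forces annihilation of $r$. The awB hypothesis is used only in the very last step, to promote ``maximum idempotent annihilator'' to ``principal annihilator ideal''.
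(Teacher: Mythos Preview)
Your proof is correct and follows essentially the same approach as the paper's: for $(\Leftarrow)$ both take $c=ea=eb$ with $\ann(a-b)=Re$, and for $(\Rightarrow)$ both compute the infimum of $1$ with an element differing from it by $r$ (the paper uses $1\wrr(r+1)$, you use $1\wrr(1-r)$) to extract the maximal idempotent in $\annb r$, then invoke the awB hypothesis. Your maximality argument is slightly more direct---you show $f\rro 1$ and $f\rro 1-r$ immediately, whereas the paper detours through $g=f-cf$ and shows $g=0$---but this is a cosmetic streamlining, not a different route.
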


\begin{proof}
Suppose first that $R$ is a wB ring. Given non-zero $a,b\in R$ with $a\ne b$, let $eR = \ann (a-b)$, $e=e^2$. The claim is that $ae = be = a\wrr b$.  First, $ae\rro a$ and $ae\rro b$ since $ae\cdot a = a^2e=(ae)^2$ and $ae\cdot b = be\cdot b=(be)^2 =(ae)^2$. If $c\rro a$ and $c\rro b$ then $c(a-b)=0$ so that $c=ce$. Then, $cae=c^2e= c^2$ showing that $c\rro ae$ and, similarly, $c\rro be$. 

Now suppose that $R$ is an awB ring which is \rr-good. Consider $r\in R$. It must be shown that $\ann r =eR$ for some $e\in \B(R)$.  Set $a = r+1$ and $b=1$. By assumption there is $c= a\wrr  b$. Since $c^2=ca=cb$, $c\in \ann (a-b)$. Notice that since $c^2=cb=c1=c$, $c\in \annb(a-b)$.  Let $f\in \annb(a-b)$. Set $g=f-cf$. Now $ga = g(r+1) =g$ so that $g\rro a$ and $g\rro b$, showing that $g\rro c$. Hence, $g=gc$, but $gc=0$ giving $g=0$ and thus $f\rro c$.  Hence, $c$ is the unique maximal element of $\annb (a-b)$ showing that $\ann r = cR$.  \end{proof}

The calculations above show the following in an awB ring $R$: $r$ and $s$ in $R$ have a non-zero \rr-lower bound if and only if there is $0\ne e=e^2$ with $e(r-s)=0$ and $er\ne 0$.  

Constructions of wB rings will be discussed below. Among rings of continuous functions, $\cx$ is wB if and only if $X$ is basically disconnected (see the remark before Example~3.2 in \cite{NR}). According to \cite[Example~3.2]{NR}, the ring of continuous real valued functions $\text{C}(\beta\N\setminus \N)$ is awB but not wB.  Rings of continuous functions $\text{C}(X,\Z)$, where $X$ is a topological space and $\Z$ has the discrete topology, are easily seen all to be wB.  Notice that the property \rr-good is not a Pierce property. If $R$ is \rr-good then each Pierce stalk is \rr-good but the example $\text{C}(\beta \N\setminus \N)$ has stalks domains and, hence, \rr-good but the ring is not, by Theorem~\ref{awB}.

 Before pursuing consequences of Theorem~\ref{awB} it is worthwhile to observe how rarely pairs of elements, even in a wB ring, can have an \rr-supremum (denoted $\vrr$).  Moreover, it is shown below that $R$ is an \rr-upper semi-lattice if and only if it is boolean.

Assume for now that $R$ is an awB ring and that $r,s\in R$.  Consider the following pairwise disjoint subsets of $\spec \B(R)$:
\begin{equation*} \begin{array}{cccccc}
A_{r,s}&=& \coz r\cap \coz s\cap \z(r-s)\;,&
B_{r,s}&=& \coz r \cap \z(s)\\ C_{r,s}&=& \z(r)\cap \coz s\;,&
D_{r,s}&=& \z(r)  \cap \z(s)\end{array}
\end{equation*}

If, in addition $R$ is wB, each $\coz t$ is the cozero set of an idempotent.  For the given elements $r$ and $s$ the following idempotents, $e_1, e_2, e_3, e_4$ are defined by: $\coz e_1 = A_{r,s}$, $\coz e_2= B_{r,s}$, $\coz e_3 = C_{r,s}$ and $\coz e_4= D_{r,s}$. 
Note that these idempotents are orthogonal. This notation will be used in the following proposition.

\begin{prop} \label{uslwB}  Let $R$ be an awB ring. (i)~Given $r,s \in R$ then $r\vrr s$ exists only if ($*)$~$\coz r\cup \coz s\sbq A_{r,s}\cup B_{r,s} \cup C_{r,s}$. (ii)~If, in addition, $R$ is wB then $(*)$ implies that $r\vrr s$ exists. \end{prop}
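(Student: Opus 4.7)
The plan is to use the Pierce sheaf representation of $R$, whose stalks $R_x$ are domains by Proposition~\ref{NRgen}, together with the trivial observation that in a domain $a\rro b$ iff $a=0$ or $a=b$.

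For (i), I would suppose $u=r\vrr s$ exists and apply Proposition~\ref{lsl}(iii) to the stalk map, obtaining $r_x\rro u_x$ and $s_x\rro u_x$ in the domain $R_x$. Thus $r_x,s_x\in\{0_x,u_x\}$, and whenever both are nonzero they must coincide. This rules out any $x$ in the ``missing'' region $E_{r,s}:=\coz r\cap\coz s\cap\coz(r-s)$. Since the five sets $A_{r,s},B_{r,s},C_{r,s},D_{r,s},E_{r,s}$ partition $\spec\B(R)$, the emptiness $E_{r,s}=\vide$ is exactly condition $(*)$.

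For (ii), assuming wB and $(*)$, I would use the orthogonal idempotents $e_1,e_2,e_3,e_4$ and set
$$u \;=\; re_1+re_2+se_3.$$
Condition $(*)$ gives $\coz r\sbq\coz(e_1+e_2)$ and $\coz s\sbq\coz(e_1+e_3)$, so by the awB property (Proposition~\ref{NRgen}) $r=r(e_1+e_2)$ and $s=s(e_1+e_3)$; moreover $re_1=se_1$, since $r_x=s_x$ throughout $A_{r,s}=\coz e_1$. A short stalkwise check then yields $ur=r^2$ and $us=s^2$, the terms $re_3\cdot r$ and $re_2\cdot s$ vanishing because $r$ (resp.\ $s$) is zero on $C_{r,s}$ (resp.\ $B_{r,s}$); hence $r\rro u$ and $s\rro u$. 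For maximality, given any $v$ with $r\rro v$ and $s\rro v$, inspect the four regions: $u_x$ is one of $0_x$, $r_x$, $s_x$, each of which lies in $\{0_x,v_x\}$ by the domain case; so $u_x\rro v_x$ at every stalk, and since $u^2-uv$ vanishes in every stalk of the faithful Pierce representation, $u\rro v$.

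The main obstacle is guessing the right candidate $u$ and checking the cross-term bookkeeping so that $ur=r^2$ and $us=s^2$ come out cleanly; once the support identities $r=r(e_1+e_2)$, $s=s(e_1+e_3)$ and the equality $re_1=se_1$ on $A_{r,s}$ are in hand, the remaining verifications reduce to routine local computations in the domain stalks.
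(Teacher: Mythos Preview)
Your argument is correct and essentially identical to the paper's: the same stalkwise contradiction for part~(i) (the paper phrases it as a direct contrapositive rather than naming the extra set $E_{r,s}$, but the content is the same), and the same candidate $u=(e_1+e_2)r+e_3s$ for part~(ii), with minimality checked stalk by stalk. The only slip is cosmetic: in your computation of $ur$ the vanishing cross-term is $se_3\cdot r$, not $re_3\cdot r$ (there is no $re_3$ summand in $u$), but the reasoning is unaffected.
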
 

\begin{proof} Notice that condition ($*$) implies that $A_{r,s}\cup B_{r,s} \cup C_{r,s} \cup D_{r,s} = \spec \B(R)$. 

(i)~Suppose $(*)$ fails but that $r$ and $s$ have an \rr-upper bound $c$. If, for example, $x\in \coz r$ and $x\notin A_{r,s}\cup B_{r,s}\cup C_{r,s}$ then $r_x\ne 0$, $s_x\ne 0$ and $r_x\ne s_x$. But then $r_x^2= r_xc_x$ shows $r_x=c_x$ but $s_x^2=s_xc_x$ gives $s_x=c_x$, a contradiction. Hence, $r$ and $s$ do not have an upper bound. Similarly if $x\in \coz s$ and $x\notin A_{r,s}\cup B_{r,s}\cup C_{r,s}$. 

(ii)~Suppose that $\coz r \cup \coz s\sbq \coz (e_1+e_2+e_3)$. Define $c= (e_1+e_2)r+e_3s$. Then, $r^2-rc = r^2-r(e_1+e_2)r-re_3s =0$ since $re_3=0$ and $r(e_1+e_2) =r$, by  ($*$). Similarly $sc = s^2$ since $c$ is also $ (e_1+e_3)s+ e_2r$. Thus, $c$ is an upper bound of $r$ and $s$. For any $t\in R$, $c+te_4$ is also an upper bound but $t=0$ gives a smallest one of the form $c+te_4$. It will be shown that any upper bound $d$ has this form.  If $d$ is an upper bound and $x\in \coz e_1$, $r_x^2=r_xd_x$ giving $d_x=r_x$ and, similarly $d_x= s_x$. Thus $c$ and $d$ are equal over $\coz e_1$. Similarly for $\coz e_2$ and $\coz e_3$ showing that $c$ and $d$ can only differ over $D_{r,s}$. Hence, $c = r\vrr s$.  \end{proof}

\begin{prop}\label{bool} A ring $R$ is an \rr-upper semi-lattice if and only if it is a boolean ring. \end{prop}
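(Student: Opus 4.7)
The plan is to isolate one direction as essentially trivial and to reduce the other direction to a single observation about $1$.

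For the easy direction, assume $R$ is boolean. Then every element is idempotent, the order $\rro$ collapses to the usual boolean order ($a \rro b$ iff $a = ab$), and the standard formula $a\vrr b = a + b - ab$ (equivalently $a+b+ab$ in characteristic $2$) exhibits $R$ as an \rr-upper semi-lattice. This part is a line or two.

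For the substantive direction, suppose $R$ is an \rr-upper semi-lattice. The key observation is that $1$ is maximal in a very strong way: $1 \rro c$ means $1 = 1\cdot c = c$, so $1$ is the \emph{only} element above $1$. Now take an arbitrary $a \in R$ and consider $a \vrr 1$. Since any common upper bound $c$ of $\{a,1\}$ must satisfy $1 \rro c$, one is forced to have $c = 1$; and then $a \rro c = 1$ forces $a^2 = a$. Thus existence of $a\vrr 1$ implies $a$ is idempotent, so every element of $R$ is idempotent.

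It remains to observe that a unitary ring in which every element is idempotent is boolean; this is standard ($(a+a)^2 = a+a$ forces $2a=0$, and $(a+b)^2=a+b$ then forces $ab = ba$), and it is in any case consistent with the paper's global reduced hypothesis. I do not anticipate a real obstacle here: the entire argument pivots on the maximality clause "$1\rro c \Rightarrow c=1$," which immediately rules out any non-idempotent element once suprema with $1$ are required to exist.
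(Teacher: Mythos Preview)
Your argument is correct and mirrors the paper's proof exactly: both directions are handled the same way, and for the substantive direction you, like the paper, take $c=a\vrr 1$, use $1\rro c\Rightarrow c=1$, and conclude $a^2=a\cdot 1=a$. Your additional remarks verifying that ``all elements idempotent'' forces characteristic~$2$ and commutativity are harmless elaborations of what the paper leaves implicit.
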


\begin{proof} Each boolean ring is an upper semi-lattice in its natural order, which coincides with the \rr-order. On the other hand, suppose $R$ is any reduced ring which is an \rr-upper semi-lattice.  If $a\in R$, $a\ne 1$, put $c= a\vrr 1$. Then, $1^2= 1c$ so that $c=1$. Then $a^2= a$. \end{proof}

The next result looks at surjective homomorphisms from a wB ring.  The following lemma will be useful.

\begin{lem}\label{2elts}  Let $R$ be a wB ring and $a,b\in R$. There is a largest $e\in \B(R)$ such that $a\wrr eb=0$.  \end{lem}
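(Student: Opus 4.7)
The plan is to exploit the Pierce sheaf: in a wB ring every annihilator is $Rf$ with $f \in \B(R)$, so for each $r \in R$ we have $\z(r) = \coz f_r$ clopen in $\spec\B(R)$, and hence $\coz r$ is clopen as well. This wB upgrade of the general awB situation will force the ``agreement set'' $A_{a,b}$ of Proposition~\ref{uslwB} to itself be the cozero set of some idempotent $h \in \B(R)$, and the question about the largest $e$ with $a \wrr eb = 0$ will then reduce to one about the principal boolean ideal of $\B(R)$ generated by $1 - h$.

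First I would apply the above observation to $a$, $b$, and $a - b$ to see that
\[ A_{a,b} \;=\; \coz a \cap \coz b \cap \z(a-b) \]
is a finite intersection of clopen sets in $\spec\B(R)$, hence clopen, so it equals $\coz h$ for a unique $h \in \B(R)$. For any $e \in \B(R)$ I would then verify stalk-wise the identity $A_{a, eb} = \coz e \cap A_{a,b} = \coz(eh)$: at a point $x$ with $e \notin x$ one has $e_x = 1_x$ and so $(eb)_x = b_x$, whence $x \in A_{a, eb} \Leftrightarrow x \in A_{a,b}$; while at $x$ with $e \in x$ one has $(eb)_x = 0_x$ and thus $x \notin A_{a, eb}$.

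To finish: since $R$ is awB, the Pierce stalks $R_x$ are domains by Proposition~\ref{NRgen}, and in a domain $u \wrr v$ is nonzero precisely when $u = v \ne 0$. Combined with the Pierce-property characterization of \rr-orthogonality (Lemma~\ref{Pprop}), this yields $a \wrr eb = 0$ iff $A_{a, eb} = \emptyset$ iff $eh = 0$ iff $e \le 1 - h$ in the boolean order on $\B(R)$. The unique largest such idempotent is $1 - h$, which is therefore the element sought. The only substantive point is the first step: it is the full wB hypothesis (rather than mere awB) that makes $A_{a,b}$ clopen and so allows $h$ to exist; once that is in place the rest is routine Pierce-sheaf bookkeeping.
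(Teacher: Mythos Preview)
Your proof is correct and follows essentially the same Pierce-sheaf approach as the paper: identify the clopen ``agreement set'' $A_{a,b}$, take the complementary idempotent, and verify stalk-wise via Lemma~\ref{Pprop}. The paper phrases this slightly differently by first forming $c = a \wrr b$ (invoking Theorem~\ref{awB}) and then taking the idempotent $e$ with $\ann c = eR$, but since $\coz c = A_{a,b}$ your $1-h$ is exactly the paper's $e$, and the maximality check is the same.
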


\begin{proof} By Theorem~\ref{awB}, $R$ is \rr-good. Let $c= a\wrr b$ and $e\in \B(R)$ be such that $\ann c = eR$. Then for $x\in \spec \B(R)$, $c_x =0_x$ if and only if $e_x=1_x$. In other words, $c_x=a_x =b_x \ne 0_x$ if and only if $e_x=0_x$. Then there is no $x$ so that $a_x= (eb)_x \ne 0_x$. This shows that $a\wrr eb=0$. Now suppose that $f\in \B(R)$ is such that $e<f$. Thus, there is $x\in \spec \B(R)$ such that $f_x= 1_x$ while $e_x= 0_x$. Then, for each such $x$ it follows that $c_x=a_x=b_x  =(fb)_x\ne 0_x$ which shows that $a\wrr fb\ne 0$ 
by Lemma~\ref{Pprop}. \end{proof}

The following result about lifting \rr-orthogonal sets also gives another family of examples in the study of lifting orthogonal sets in \cite{R}. See also Proposition~\ref{liftcx}, below.

\begin{thm} \label{liftwb} Let $R$ and $S$ be reduced rings where $R$ is wB  and $\phi \colon R\to S$ is a surjective ring homomorphism such that $\phi (\B(R)) = \B(S)$. If $\{s_n\}_{n\in \N}$ is an \tbf{rr}-orthogonal set in $S$ then it lifts, via $\phi$, to an \rr-orthogonal set in $R$. \end{thm}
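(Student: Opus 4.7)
The plan is to construct the lift $\{r_n\}_{n\in\N}$ inductively. Start with $r_1$ any preimage of $s_1$ under $\phi$. Suppose $r_1,\ldots,r_{n-1}$ are pairwise \rr-orthogonal elements of $R$ with $\phi(r_i)=s_i$. Pick any $a_n\in\phi^{-1}(s_n)$. For each $i<n$, apply Lemma~\ref{2elts} to obtain the largest idempotent $e_{i,n}\in\B(R)$ with $r_i\wrr e_{i,n}a_n=0$. Set $e_n=\prod_{i<n}e_{i,n}$ and $r_n=e_na_n$. Two things then need to be verified: that $r_i\wrr r_n=0$ for each $i<n$, and that $\phi(r_n)=s_n$.

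The \rr-orthogonality is a short computation. Because $e_n\le e_{i,n}$ in $\B(R)$, a direct check from Definition~\ref{defrr} gives $e_na_n\rro e_{i,n}a_n$; by transitivity of $\rro$, any common \rr-lower bound of $r_i$ and $r_n$ is also a lower bound of $r_i$ and $e_{i,n}a_n$, and therefore vanishes by the defining property of $e_{i,n}$.

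The main obstacle is establishing $\phi(r_n)=s_n$, and it suffices to show $\phi(e_{i,n})s_n=s_n$ for each $i<n$, since then the finite product gives $\phi(e_n)s_n=s_n$. Set $f_{i,n}=1-e_{i,n}$. Because $R$ is wB, the sets $\coz r_i$, $\coz a_n$ and $\z(r_i-a_n)$ are clopen in $\spec\B(R)$; combining Lemma~\ref{Pprop}, the characterization of \rr-orthogonality in a domain stalk, and the maximality of $e_{i,n}$, one identifies $\coz f_{i,n}=\coz r_i\cap\coz a_n\cap\z(r_i-a_n)$. On this clopen set $r_i$ and $a_n$ agree in every Pierce stalk, so a stalkwise comparison yields the identity $f_{i,n}r_i=f_{i,n}a_n$ in $R$. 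Applying $\phi$ gives the key equation $(1-\phi(e_{i,n}))s_n=\phi(f_{i,n})s_i$.

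To close the argument, put $g=\phi(f_{i,n})\in\B(S)$ and $c=gs_n=gs_i$. Since $g$ is a central idempotent of $S$, one checks directly that $c^2=gs_n^2=cs_n$ and $c^2=gs_i^2=cs_i$, so $c$ is a common \rr-lower bound of $s_i$ and $s_n$ in $S$. The hypothesis $s_i\wrr s_n=0$ then forces $c=0$, i.e.\ $(1-\phi(e_{i,n}))s_n=0$, which closes the induction. The essential ingredients are the wB structure on $R$ (which provides the clopen description of $\coz f_{i,n}$ and the sheaf identity $f_{i,n}r_i=f_{i,n}a_n$) and Lemma~\ref{2elts} (which delivers the maximal idempotents $e_{i,n}$); together they let the orthogonality data in $S$ be transported back into $R$.
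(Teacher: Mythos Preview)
Your argument is correct and follows the same inductive skeleton as the paper's proof: lift $s_n$ arbitrarily, invoke Lemma~\ref{2elts} to obtain the maximal idempotents $e_{i,n}$, multiply the lift by their combination, and verify orthogonality and the image. Two differences are worth noting. First, you form the correcting idempotent as the product $e_n=\prod_{i<n}e_{i,n}$, whereas the paper writes it as $\varepsilon=1-\sum_i(1-e_i)$ after first checking that the $1-e_i$ are pairwise orthogonal; your formulation is the same element when that orthogonality holds, but it sidesteps the need to verify it. Second, and more substantively, the paper establishes $\phi(r_{n+1})=s_{n+1}$ by passing to the Pierce stalks of $S$ via the correspondence $\spec\B(S)\hookrightarrow\spec\B(R)$, which is where the hypothesis $\phi(\B(R))=\B(S)$ enters. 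Your route---deriving the sheaf identity $f_{i,n}r_i=f_{i,n}a_n$ in $R$, pushing it forward, and observing that $g s_i=g s_n$ is then a common \rr-lower bound of $s_i$ and $s_n$ in $S$---is purely element-wise and never invokes that hypothesis. So your proof in fact yields a slight strengthening of the theorem: the assumption $\phi(\B(R))=\B(S)$ can be dropped.
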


\begin{proof} Let $r_1\in R$ be any lifting of $s_1$.  Suppose for $n\ge 1$ that $s_1, \ldots, s_n$ have been lifted to $r_1, \ldots, r_n$, respectively, where $\{r_1,\ldots, r_n\}$ is \rr-orthogonal in $R$. Let $u$ be any lifting of $s_{n+1}$. Notice that for any pair $1\le i<j\le n$, $r_i$ and $r_j$ never coincide and are non-zero in a Pierce stalk. Assume $\{x\in \spec\B(R) \mid \text{for some}\; 1\le i\le n, (r_i)_x=u_x\ne 0_x\} \ne\vide$ for otherwise $r_{n+1}=u$ can be used. As in Lemma~\ref{2elts} there are maximal idempotents $e_1, \ldots, e_n$ such that $r_i\wrr e_iu=0$, and, by what has been observed, $\{1-e_1, \ldots,1- e_n\}$ are orthogonal, in the sense of idempotents. This is because it is not possible to have, for $i\ne j$, $(r_i)_x=u_x =(r_j)_x\ne 0_x$, which would happen if $(e_i)_x= (e_j)_x =0_x$, i.e., if $((1-e_i)(1-e_j))_x \ne 0_x$.  Put $\varep = 1-((1-e_1)+ \cdots +(1-e_n))$. Then, for $x\in \spec \B(R)$, $\varep_x=0_x$ exactly where one of the $(e_i)_x=0_x$.  Now put $r_{n+1} = \varep u$.  It first needs to be shown that $r_i\wrr \varep u =0$ for $i=1,\ldots, n$  or, by Lemma~\ref{Pprop}, that $(r_i\wrr \varep u)_x= 0_x$ for all $x\in \spec \B(R)$. If $(\varep u)_x \ne 0_x$ then $\varep_x =1_x$ and then $(e_i)_x=1_x$ for all $i$. This means that $(r_i\wrr \varep u)_x=(r_i\wrr e_iu)_x=0_x$.  If $(\varep u)_x=0_x$ then, of course, $(r_i\wrr \varep u)_x= 0_x$. By Lemma~\ref{Pprop}, each $r_i\wrr \varep u =0$.

It must now be shown that $\phi(r_{n+1}) = s_{n+1}$. The restriction of $\phi$ to $\B(R)$ is, by hypothesis, onto $\B(S)$; call the restriction $\beta$.  The maximal ideals of $\B(S)$ are in one-to-one correspondence with those of $\B(R)$ containing $\ker \beta$.  It is only necessary to look at those $x\in \spec \B(R)$ where $\ker \beta \sbq x$. Consider now $x\in \spec \B(R)$ with $\ker \beta\sbq x$ and $\beta (x) = y\in \spec\B(S)$. (i)~If $\varep_x=0_x$ then, for some $i$, $1\le i\le n$, $(e_i)_x=0_x$. This means that $(r_i\wrr u)_x\ne 0$ and then $(r_i)_x= u_x \ne 0_x$. Then, $\phi(r_i)_y = (s_i)_y =\phi(u)_y= (s_{n+1})_y =0_y$, since $s_i$ and $s_{n+1}$ are orthogonal. However, $\phi(\varep u)_y=0_y$ as well. (ii)~If $\varep_x=1_x$ then for all $i$, $1\le i\le n$, $(r_i)_x \ne u_x$ or $(r_i)_x = u_x= 0_x$. In any case $\phi(r_i)_y = (s_i)_y \ne \phi(u)_i = (s_{n+1})_y$ or $\phi(r_i)_y = \phi(u)_y = 0_y$. Here, $\phi(u)_y= \phi(\varep u)_y$. 
Hence, in all cases, $\phi(u)_y= \phi(\varep u)_y = (s_{n+1})_y$. In other words, $\phi(r_{n+1}) = s_{n+1}$. \end{proof}

Notice that in Theorem~\ref{liftwb} that the hypotheses do not guarantee that $\phi$ preserves the \tbf{rr}-order infima but that, in any case, countable \rr-orthogonal sets in $S$ lift to \rr-orthogonal sets in $R$.  No example is known to date of a surjective homomorphism of reduced rings where the lifting of countable \rr-orthogonal sets fails.

 A strongly regular ring (see e.g., \cite[page~199]{L}) $R$ is a wB ring where the stalks are division rings. In this case Proposition~\ref{lsl}~(iii) can be strengthened. If $r\in R$ then there is $r'\in R$ with $r^2r' =r$ and $(r')^2r = r'$. The idempotent $rr'$ will be denoted $\e(r)$. If $\phi \colon R\to S$ is a surjective ring homomorphism with kernel $K$ then $S$ is also strongly regular, $\phi(\e(r)) = \e(\phi(r))$ and $K$ is generated by idempotents. Moreover, if $r\in K$ then $\e(r)\in K$ and \emph{vice versa}. 

\begin{cor} \label{str-reg}  Let $R$ and $S$ be strongly regular rings equipped with the \tbf{rr}-order, and $\phi \colon R \to S$ a surjective ring homomorphism. Then, $\phi$ is an \rr-lower semi-lattice homomorphism. Moreover, a countable \rr-order orthogonal set in $S$ can be lifted to an \rr-order orthogonal set in $R$. \end{cor}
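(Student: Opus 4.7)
The plan is to bolt together the three preceding results: strongly regular rings are wB (mentioned in the paragraph above the corollary), wB rings are \rr-good (Theorem~\ref{awB}), kernels of surjections of strongly regular rings are generated by idempotents (also noted in the same paragraph), so Proposition~\ref{lsl}(iv) handles the semi-lattice homomorphism claim, and the lifting claim reduces to checking the hypothesis of Theorem~\ref{liftwb}.

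More concretely, for the first assertion I would argue: since $R$ and $S$ are strongly regular, both are wB, and hence both are \rr-good by Theorem~\ref{awB}. The kernel $K$ of $\phi$ is generated by idempotents (this is among the properties recalled just before the statement of the corollary, using that $r\in K$ forces $\e(r)\in K$ and that $r=r\e(r)$). All hypotheses of Proposition~\ref{lsl}(iv) are now in place, so $\phi$ preserves binary infima in the \rr-order, i.e., is an \rr-lower semi-lattice homomorphism.

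For the second assertion I would invoke Theorem~\ref{liftwb}, whose hypothesis requires $\phi(\B(R))=\B(S)$. To verify this, take any $f\in \B(S)$ and note that $f=\e(f)$ because $f=f^2$. Choose any lift $r\in R$ of $f$ under the surjection $\phi$. Then $\e(r)\in\B(R)$ and, using the preservation property $\phi(\e(r))=\e(\phi(r))$ recalled before the corollary, we get $\phi(\e(r))=\e(f)=f$. Thus $\B(S)\sbq\phi(\B(R))$, and the reverse inclusion is automatic since ring homomorphisms take idempotents to idempotents. With $R$ wB and the idempotent-surjectivity in hand, Theorem~\ref{liftwb} lifts any countable \rr-orthogonal family in $S$ to one in $R$.

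The only step with any content is the verification that $\phi$ maps $\B(R)$ onto $\B(S)$, and even that is a one-line application of the functoriality $\phi\circ\e=\e\circ\phi$; there is no real obstacle, which is exactly why the authors package the result as a corollary rather than as a theorem.
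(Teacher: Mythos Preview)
Your argument is correct and follows the same route as the paper: strongly regular implies wB, hence \rr-good by Theorem~\ref{awB}; the kernel is generated by idempotents, so Proposition~\ref{lsl}(iv) gives the semi-lattice homomorphism; and Theorem~\ref{liftwb} handles the lifting. Your explicit verification that $\phi(\B(R))=\B(S)$ via $\phi(\e(r))=\e(\phi(r))$ is a welcome detail that the paper leaves implicit.
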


\begin{proof} Since the Pierce stalks of strongly regular rings are division rings, these rings are awB but, in addition, each principal ideal is generated by an idempotent so that they are, in fact, wB rings. Thus, $R$ and $S$ are \rr-good (Theorem~\ref{awB}) and $\phi$ preserves the lower semi-lattice structure since all ideals are generated by idempotents (Proposition~\ref{lsl}~(iv)). The lifting follows from Theorem~\ref{liftwb}. \end{proof}

Notice that when $\phi \colon R\to S$ is a ring surjection of wB rings, $\phi$ need not be a \rr-lower semi-lattice morphism. Consider $\phi \colon \Z\to \Z/6\Z \cong \Z/2\Z \times \Z/3\Z$.  The \rr-orthogonal set $\{7,13\}$ goes to the pair $((1,1), (1,1))$, which is not \rr-orthogonal.

Theorem~\ref{awB} gives a class of reduced rings which are \rr-good and shows that there are reduced rings which are not. The following example is a reduced ring which is not \rr-good but which has a wB subring $S$ with $\B(R) = \B(S)$.  Section~3 presents examples of rings which are \rr-good but which are not awB, for example $\text{C}(\R)$, which has no non-trivial idempotents. Example~\ref{notlattice} will show up again in Example~\ref{eq-bad2}

\begin{ex}\label{notlattice}  Fix a prime $p$ and let $R$ be the ring of sequences of integers which are eventually constant modulo $p$.  Then, $R$ lies between the \rr-good rings $S$ of eventually constant integer sequences and $P= \prod_\N\Z$ with $\B(R) = \B(S)$. However, $R$ is not \rr-good.  \end{ex}

\begin{proof} The rings $S$ and $P$ are even wB. Notice that an idempotent in $R$ has finite or cofinite support. Consider $r$ which is constantly $1$ and $s = (1, p+1, 1, p+1, 1,\ldots)$. Thsn, $r$ and $s$ have non-zero \rr-lower bounds, for example $(1, 0,0,0,\ldots )$ or $(1,0, 1, 0,0,\ldots)$ but no largest one. \end{proof}

In \cite[Corollary~1.6]{NR} it is pointed out that in a commutative noetherian ring, the conditions awB and wB coincide.   A commutative noetherian reduced ring $R$ is an order in a finite product of fields and it is natural to ask if such a ring is \rr-good. When $R$ has three or fewer minimal prime ideals the answer is ``yes'' (neither commutativity nor the noetherian property is needed) but not in general. The proof for three minimal primes will show what can go wrong with four or more.

\begin{prop} \label{Goldie}  Let  $R$ be a reduced ring. 

(i)~Suppose $R$ has three or fewer minimal primes. Then $R$ is \rr-good.

(ii)~If $R$ has four minimal prime ideals it need not be \rr-good. \end{prop}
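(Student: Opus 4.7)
The plan is to exploit the subdirect embedding $R \hookrightarrow \prod_{\mf{p}\in\minspec R} R/\mf{p}$ into a product of domains, which is available because minimal primes of a reduced ring are completely prime (Lemma~\ref{c.prime}). From $c(c-a)=0$ one sees that $c \rro a$ holds iff at every minimal prime $\mf{p}$ either $c\equiv 0$ or $c \equiv a \pmod{\mf{p}}$. Hence the \rr-lower bounds of $\{a,b\}$ are exactly the $c\in R$ that vanish modulo every $\mf{p} \in D := \{\mf{p}: a\not\equiv b\pmod{\mf p}\}$ and are $\equiv 0$ or $\equiv a$ modulo every $\mf p \in E := \{\mf p: a\equiv b \pmod{\mf p}\}$. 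Since elements of $R$ agreeing at every minimal prime coincide, such a $c$ is determined by its \emph{profile} $T(c) := \{\mf p \in E' : c\equiv a \pmod{\mf p}\}$, where $E' := \{\mf p\in E: a\not\equiv 0\pmod{\mf p}\}$ collects the primes at which the choice $0$ vs.\ $a$ is genuine.

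For (i), if $a=b$ then $a \wrr b = a$; otherwise $D\ne\vide$, so $|E'|\le |E|\le n-1\le 2$. Let $\mathcal{T} \subseteq 2^{E'}$ be the set of realized profiles; it contains $\vide$ via $c=0$. The key step is the closure property: if lower bounds $c_1,c_2$ have disjoint profiles $T_1,T_2$, then $c_1+c_2$ is a lower bound with profile $T_1\cup T_2$, as a mod-$\mf p$ inspection shows (at each prime one summand is $\equiv 0$ and the other contributes the correct value $0$ or $a$). Since $|E'|\le 2$, the only way $\mathcal{T}$ could fail to have a $\subseteq$-greatest element would be two disjoint singletons in $\mathcal{T}$ without their union, which the closure immediately supplies. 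Denoting the greatest profile by $T_0$ and its representative by $c_0$, a final per-prime check that $c'c_0=(c')^2$ for any lower bound $c'$ shows $c' \rro c_0$, so $c_0 = a\wrr b$.

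For (ii), I will exhibit the concrete subring of $\Z^4$ cut out by a single parity condition,
\[
R \;=\; \{(w,x,y,z)\in\Z^4 : w+x+y+z\equiv 0 \pmod 2\},
\]
whose four minimal primes are $\mf p_i = R\cap\ker \pi_i$. For $a=(1,1,1,1)$ and $b=(1,1,1,3)$, the identities $c^2=ca=cb$ force $c=(c_1,c_2,c_3,0)$ with each $c_i\in\{0,1\}$ and $c_1+c_2+c_3$ even, leaving the four lower bounds $0$, $(1,1,0,0)$, $(1,0,1,0)$, $(0,1,1,0)$. The three nonzero ones are pairwise \rr-incomparable by direct computation of $c'c''$, and the only candidate $(1,1,1,0)$ that could \rr-dominate all three is excluded by the parity condition; hence $\{a,b\}$ has no \rr-infimum in $R$.

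The main obstacle is the combinatorial core of (i): after the profile characterization, one must verify the disjoint-union closure and carefully isolate $E'$ inside $E$, because at primes in $E\setminus E'$ the two ``choices'' $c\equiv 0$ and $c\equiv a$ coincide and must not be double-counted. Once these are in hand, the bound $|E'|\le 2$ leaves too few subsets to admit incomparable maximal profiles; this is exactly what fails in (ii), where $|E'|=3$ and the three size-$2$ profiles are realized but their union is forbidden.
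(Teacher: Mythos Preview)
Your argument for (i) is correct and is essentially the paper's componentwise case analysis via the embedding $R\hookrightarrow \prod_{\mf p}R/\mf p$, recast more cleanly in the language of profiles; the closure step $c_1+c_2$ for disjoint profiles is precisely the observation, implicit in the paper's final case, that if $(a_1,0,0)$ and $(0,a_2,0)$ both lie in $R$ then so does their sum $(a_1,a_2,0)$.

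Part (ii), however, collapses: your set $R=\{(w,x,y,z)\in\Z^4 : w+x+y+z\equiv 0\pmod 2\}$ is \emph{not a subring} of $\Z^4$. It contains $(1,1,0,0)$ and $(1,0,1,0)$ (each with even coordinate sum), but their product $(1,0,0,0)$ has odd coordinate sum and lies outside $R$. The map $(w,x,y,z)\mapsto w+x+y+z\bmod 2$ is additive but not multiplicative, so its ``kernel'' is only an additive subgroup. Consequently the list of ``lower bounds'' you compute---which includes the very elements $(1,1,0,0)$, $(1,0,1,0)$, $(0,1,1,0)$ just used to break multiplicativity---is not taking place inside a ring at all, and no conclusion about $a\wrr b$ can be drawn.

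The paper's counterexample is a carefully built noetherian $K[x,y,z]$-subalgebra of $K(x,y,z)^4$, and the verification that the candidate infimum $(0,y,y,y)$ is absent requires a nontrivial polynomial computation. Your profile analysis actually explains \emph{why} a cheap example is elusive: any subring containing two lower bounds with disjoint profiles automatically contains their sum, so one must arrange three incomparable lower bounds whose pairwise profile intersections are all nonempty while the full union is unrealized---a genuinely multiplicative obstruction, not a linear one. You will need a different ring for (ii).
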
 

\begin{proof} (i) The case of three minimal primes, $\mf{p}_1, \mf{p}_2, \mf{p}_3$, will be considered. Embed $R$ in $R/\mf{p}_1\times R/\mf{p}_2\times R/\mf{p}_3 =S$ via $\phi\colon R\to S$. The ring $R$ will be viewed as a subring of $S$. Consider $a,b\in R$ and their images in $S$, say $(a_1,a_2,a_3)$ and $(b_1,b_2,b_3)$.  If $a_i\ne b_i$ for $i=1,2,3$ then $a$ and $b$ have no non-zero lower bounds in $S$ and so,  $a\wrr b=0$ in $R$. If $a$ and $b$ differ in precisely two places, say, $a_2\ne b_2$ and $a_3\ne b_3$ then the inf in $S$ is $(a_1, 0,0)$.  If $(a_1,0,0,)\in R$ then that is the inf in $R$ otherwise $a\wrr b=0$. Finally suppose $a$ and $b$ coincide in the first two places but not in the third. The inf in $S$ is $(a_1,a_2,0)$. If it is in $R$ then it is $a\wrr b$. If not, there are three possibilities: $(a_1,0,0)\in R$, $(0,a_2,0) \in R$ and neither is in $R$.  In each case $a\wrr b$ exists in $R$. 

(ii) Let $K$ be a field. Let $Q$ be a product of four copies of the field $K(x,y,z)$. The ring $R$ is generated as a $K[x,y,z]$-subalgebra of $Q$ by the following elements:
\begin{equation*}\begin{array}{ccccccc}  r&=&(x^2+x,y,y,y) &\;\;&b_1&=&(0,z,0,0)\\ s&=&(x,y,y,y)&\;\;&b_2&=&(0,0,z,0)\\ a&=&(0,y,y,0)&\;\;&b_3&=&(0,0,0,z)\\ b&=&(0,0,y,y)&\;\;&\mf{1}&=&(1,1,1,1) \end{array}\end{equation*} The ring $R$ is noetherian with four minimal prime ideals and will be seen not to be \rr-good.

Note first that $\qcl(R) = Q$ since $r-s, b_1,b_2, b_3$ give the four idempotents of $Q$ in $\qcl(R)$.  The ring $R$ is noetherian with 4 minimal primes by Goldie's Theorem. 

Note that, in $R$, $a$ and $b$ are common lower bounds of $r$ and $s$, but $a$ and $b$ are not \rr-related. The only candidate for a lower bound of $r$ and $s$ which is larger than both $a$ and $b$ is $(0,y,y,y)$. The aim is to show that $(0,y,y,y)$  is not in $R$. Suppose on the contrary that it is. Then, there is a polynomial expression $F$, with coefficients in $K[x,y,z]$, in 7 variables with zero constant term which is evaluated at the vector $\mbf{v}=(r,s,a,b, b_1,b_2, b_3)$ to yield $F(\mbf{v}) +C = (0,y,y,y)$, where $C$ is a constant from $K[x,y,z]$.  

Since $z$ does not appear in an essential way in the main equation and is only used to obtain idempotents in $\qcl(R)$ it is possible to reduce the problem by setting $z=0$. Once this has been done there is a polynomial $f$ in four variables $u_1,u_2, u_3, u_4$ with coefficients in $K[x,y]$ and zero constant term. Set $\mbf{w}=(r,s,a,b)$ and then $f(\mbf{w}) +c = (0,y,y,y)$ for some constant $c\in K[x,y]$. The aim is to show that such an equation leads to a contradiction.  Subscripts will indicate the four components.

The polynomial $f$ can be written $f = h+f_a+ f_b+f_{ab}$ where $h$ has monomials only in $u_1$ and/or $u_2$, $f_a$ has monomials with factor $u_3$ but not $u_4$, $f_b$ has monomials with factor $u_4$ but not $u_3$, and $f_{ab}$ has monomials with factors both $u_3$ and $u_4$. Recall that all these polynomials have 0 constant term.

Note first that only $h$ influences the first component. Set $h(\mbf{w})_1= l$. Then, $c=(-l,-l,-l,-l)$. Moreover, $l$ is divisible by $x$. Now consider the other components. 

Notice the following: $f_a(\mbf{w})_1= f_a(\mbf{w})_4=0$, $f_b(\mbf{w})_1= f_b(\mbf{w})_2=0$, $f_{ab}(\mbf{w})_1= f_{ab}(\mbf{w})_2= f_{ab}(\mbf{w})_4=0$, and
\begin{equation*}\begin{array}{cccccc} (i)\;& h(\mbf{w})_2&=& h(\mbf{w})_3 &=&h(\mbf{w})_4\\(ii)\;&f_a(\mbf{w})_2&=& f_a(\mbf{w})_3& &\\
(iii)\;&f_b(\mbf{w})_3&=&f_b(\mbf{w})_4& &
\end{array}\end{equation*}
The elements $h(\mbf{w})_2,f_a(\mbf{w})_2, f_b(\mbf{w})_4$ are divisible by $y$ and  $f_{ab}(\mbf{w})_3$ is divisible by $y^2$. The components can now be computed.
\begin{equation*} \begin{array}{ccccc} (f(\mbf{w})+c)_2&=& h(\mbf{w})_2 +f_a(\mbf{w})_2 -l &=&y\\ (f(\mbf{w})+c)_4&=& h(\mbf{w})_4+ f_b(\mbf{w})_4 -l&=& y\\ (f(\mbf{w})+c)_3&=& h(\mbf{w})_3+ f_a(\mbf{w})_3+ f_b(\mbf{w})_3 +f_{ab}(\mbf{w})_3 -l&=& y\end{array} \end{equation*}
It follows that $h(\mbf{w})_2+f_a(\mbf{w})_2 = l+y$ and $h( \mbf{w})_4+f_b(\mbf{w})_4 = l+y$ and, using (i), $f_b(\mbf{w})_4 = f_a(\mbf{w})_2$. In the expression for $(f(\mbf{w})+c)_3$ the sum of the first two terms equals $l+y$ showing that $f_b(\mbf{w})_3 +f_{ab}(\mbf{w})_3 =0$. Since $f_{ab}(\mbf{w})_3$ is divisible by $y^2$, so is $f_b(\mbf{w})_3$ and, from that, so is $f_a(\mbf{w})_3$. This last is because $f(\mbf{w})_2= f(\mbf{w})_4$ and the various equalities with (ii) and (iii) yield $f_a(\mbf{w})_3 = f_b(\mbf{w})_3$.

In ($*$)~$h(\mbf{w})_3+ f_a(\mbf{w})_3+f_b(\mbf{w})_3+ f_{ab}(\mbf{w})_3 = l+y$ all the terms on the left are divisible by $y$. Hence, $l$ is divisible by $y$. To proceed, a more detailed look at $h(u_1,u_2)$ is needed. Let $A$ be a finite set of pairs of non-negative integers $i,j$ with $i+j\ge 1$ and, for $i,j\in A$, $a_{ij}\in K[x,y]$.  Each $a_{ij}$ can be written $a_{ij}'+a_{ij}''$, where $a_{ij}'$ contains all the terms of $a_{ij}$ with factor $y$ and $a_{ij}''$ all the remaining terms.   Write
\begin{eqnarray*}(**)&l=h(x^2+x,x)&=\sum_Aa_{ij}(x^2+x)^ix^j \\&&=\sum_Aa_{ij}'(x^2+x)^ix^j +\sum_Aa_{ij}'' (x^2+x)^ix^j\;.\end{eqnarray*} Since $y|l$, the last term in ($**$) is zero. 

From the expression for $f(\mbf{w})_3$, it follows that $f_a(\mbf{w})_3+ f_b(\mbf{w})_3+ f_{ab}(\mbf{w})_3= l+y - h(\mbf{w})_3$ is divisible by $y^2$. Since $x|l$ and $y|l$, one can write $l =l'xy$. Now, $$h(\mbf{w})_3 = \sum_Aa_{ij}'y^{i+j}+ \sum_{i+j\ge 2}a_{ij}''y^{i+j} +(a_{10}''+a_{01}'')y\;.$$ The first two terms on the right of this expression are divisible by $y^2$ and, hence, $y^2$ divides $l'xy +y -(a_{10}''+ a_{01}'')y$ and $y$ divides $l'x+1 -(a_{10}''+a_{01}'')$. It follows that $a_{10}''+a_{01}''= px+1$ where $p\in K[x]$. Now return to ($**$) to look at the last terms which are $a_{10}''(x^2+x) +a_{01}''x = a_{10}''x^2+ (px+1)x$. There are no other degree 1 terms in $x$. This contradicts the fact that $\sum_Aa_{ij}''(x^2+x)^ix^j =0$ and concludes the proof. 
 \end{proof}
 
 The remainder of this section is devoted to showing how to embed each commutative reduced ring into a wB ring in various ways.  Every such ring will be seen to have an essential wB extension by using the complete ring of quotients in the next proposition. It does not require commutativity.
 
 \begin{prop}\label{embedwB}  Let $R\sbq S$ be  reduced rings where $S$ is wB. Let $T\sbq S$ be the ring generated by $R$ and those idempotents from $\B(S)$ which generate the annihilators, in $S$, of the elements of $R$. Then, $T$ is wB.  \end{prop}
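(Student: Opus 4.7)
The plan is to show that for each $t \in T$ there is an idempotent $e \in T$ with $\ann_S(t) = eS$. Given this, $\ann_T(t) = \ann_S(t) \cap T = eT$: clearly $eT \sbq eS \cap T$, and conversely any $x \in eS \cap T$ satisfies $x = ex \in eT$ (since $x = es$ for some $s \in S$ gives $ex = e^2 s = es = x$). Hence $T$ is wB.

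I first describe elements of $T$. Since all idempotents of the reduced ring $S$ are central, every $t \in T$ involves only finitely many of the adjoined idempotents $e_{r_1'}, \ldots, e_{r_k'}$ (with $r_i' \in R$) and can be rewritten as
\[
t = \sum_{j=1}^{m} a_j g_j ,
\]
where $a_j \in R$ and $g_1, \ldots, g_m$ are the atoms of the finite boolean subalgebra of $\B(S)$ generated by $\{e_{r_1'}, \ldots, e_{r_k'}\}$. These atoms are pairwise orthogonal, sum to $1$, and all lie in $T$.

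The key computation is the annihilator of each piece $a_j g_j$. Let $f_j \in T$ be the idempotent with $\ann_S(a_j) = f_j S$, provided by wB of $S$. The claim is
\[
\ann_S(a_j g_j) = h_j S , \qquad h_j := 1 - g_j + f_j g_j ,
\]
an idempotent that lies in $T$. The inclusion $h_j \cdot a_j g_j = 0$ follows from $f_j a_j = 0$ and $g_j(1-g_j) = 0$. Conversely, if $s a_j g_j = 0$ then $s(1-f_j) g_j \cdot a_j = s a_j g_j - s f_j a_j g_j = 0$, so $s(1-f_j)g_j \in \ann_S(a_j) = f_j S$, whence $s(1-f_j)g_j = f_j \cdot s(1-f_j)g_j = 0$, which rearranges to $s = sh_j \in h_j S$.

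Finally, multiplying $st$ on the right by $g_j$ and using orthogonality yields $st \cdot g_j = s a_j g_j$, so $st = 0$ iff $s a_j g_j = 0$ for every $j$, and therefore
\[
\ann_S(t) = \bigcap_{j=1}^{m} h_j S = \Bigl(\prod_{j=1}^{m} h_j\Bigr) S ,
\]
using that in a reduced ring the intersection of principal ideals generated by central idempotents is generated by their product. The idempotent $e := \prod_j h_j$ lies in $T$, completing the argument. The main obstacle is the explicit formula for $h_j$; once it is in hand, the remaining steps are routine bookkeeping with orthogonal central idempotents.
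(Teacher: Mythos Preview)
Your proof is correct and follows essentially the same approach as the paper: write $t=\sum a_jg_j$ over pairwise orthogonal idempotents and exhibit the annihilator idempotent explicitly (the paper's $\sum e(r_i)e_i+\varepsilon$ is exactly your $\prod_j h_j$ once the product is expanded, since your $g_j$ sum to~$1$). The only cosmetic difference is that you compute $\ann_S(t)$ first and then intersect with $T$, whereas the paper works directly with $\ann_T(t)$; your route is a mild streamlining because it avoids decomposing the annihilating element as well.
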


\begin{proof} The following notation will be used. For $r\in R$, let $\ann_Sr = e(r)S$, where $e(r) \in \B(S)$. Set $\mc{S} = \{e(r)\mid r\in R\}$ and $T = R[\mc{S}]$. It needs to be shown that the annihilator in $T$ of an element of $T$ is generated by an idempotent.  A typical non-zero element $t\in T$ can be written $t=\sum_{i=1}^kr_ie_i$ where the $0\ne r_i\in R$ and the $e_i$ are idempotents which, after the standard manipulations on idempotents, can be assumed to be orthogonal and derived from elements of $\mc{S}$ (and are thus in $T$). Put $\varep = 1-(e_1+\cdots +e_k)$. The claim is that $\ann_T t$ is generated by $e=\sum_{i=1}^ke(r_i)e_i +\varep$. It is clear that $te=0$. Let $a\in \ann_Tt$. Then $a$ can be written in the form $a=\sum_{j=1}^ms_jf_j$, with each $s_j\in R$ and the $f_j$ a set of orthogonal idempotents derived from those in $\mc{S}$. For each $i=1,\ldots, k$, the orthogonality of the $e_i$ shows that $0=r_ie_i\sum_{j=1}^ms_jf_j$ so that for each $j=1,\ldots, m$, $s_jf_je_i\in e(r_i)T$ and $s_jf_je_i\in e_ie(r_i)T$. From this, $0 =(\sum_{i=1}^kr_ie_i)(\sum_{j=1}^ms_jf_j) = \sum_{j=1}^m\sum_{i=1}^kr_is_je_if_j= \sum_{i=1}^mr_ie_ia$. By the orthogonality of the $e_i$, each $r_ie_ia=0$, which shows that $e_ia=e_iae(r_i)$. From this, $a = a(\sum_{i=1}^ke_ie(r_i)) +a\varep \in (\sum_{i=1}^ke_ie(r_i) + \varep) T$, as required.\end{proof}

Before applying this proposition it should be noted that neither the category of commutative wB rings, $\mc{WB}$, nor the category of commutative \rr-good rings, $\mc{RG}$, is a reflective subcategory of the category of commutative reduced rings $\mc{CR}$.  (See \cite[Chapter~V]{AHS}.) Both $\mc{WB}$ and $\mc{RG}$ are closed under products but neither is closed under equalizers as would be required for a reflective subcategory.  Hence, in both cases, the inclusion of it into $\mc{CR}$ cannot have a left adjoint. This is seen using the following examples. The first simple example, pointed out by J. Kennison, illustrates this for $\mc{WB}$.

\begin{ex}\label{eq-bad}  Let $R = \Z\times \Z$ and $S=Z_2$ with $\phi\colon R\to S$ defined by $\phi(a,b) = \bar{a}$ and $\psi\colon R\to S$ by $\psi(a,b) = \bar{b}$.  Then while $R$ and $S$ are wB, the equalizer $E$ of $\phi$ and $\psi$ is not  wB. \end{ex}

\begin{proof} Note that $(0,2)\in E$ but its annihilator is not generated by an idempotent in $E$, which has no non-trivial idempotents. \end{proof}

The same remark applies to the category $\mc{RG}$ in $\mc{CR}$. Here the example is a bit more complex.

\begin{ex}\label{eq-bad2}  Let $R=\prod_\N\Z$ and $S=\Z_2$. Let $\mc{U}$ be the set of all free ultrafilters on $\N$. For each $U\in \mc{U}$, let the ideal $I_U$ of $\prod_\N \Z_2$ be the set of those elements 0 over a set in $U$.    Define $\phi_U\colon R\to S$ by $(a_n)\mapsto (\ov{a_n}) \mapsto (\ov{a_n})+I_U$. Then the equalizer of all the $\phi_U$  is not \rr-good.  \end{ex}

\begin{proof} The ring in Example~\ref{notlattice} with $p=2$, which is not \rr-good,  will used. (Any prime $p$ could be used.) Suppose $r=(a_n)\in R$ is eventually constant modulo 2. Then, since the ultrafilters are free, $r$ will be in the equalizer. On the other hand, if $r=(a_n)\in R$ is not eventually constant modulo 2 the set, $X$, of components where $r$ is even and the set, $Y$, where they are odd are disjoint infinite sets. The sets $X$ and $Y$ are not both in all $U\in \mc{U}$.  Thus, $r$ will not have the same image under all of the $\phi_U$.  \end{proof}

In the case of a commutative reduced ring  $R$ (i.e., commutative semiprime rings) there are various standard ways of embedding $R$ into a regular ring. Each will yield a wB ring according to Proposition~\ref{embedwB}. The first, which is mentioned in passing, is the \emph{universal regular ring} studied in \cite{H} and \cite{Wi}. The functor $T\colon \mc{CR} \to \mc{VNR}$, where $\mc{VNR}$ is the category of commutative regular rings, embeds each $R\in\mc{CR}$ in  $T(R)\in \mc{VNR}$ so that the $T$ is the left adjoint of the inclusion functor $\mc{VNR}\to \mc{CR}$. If Proposition~\ref{embedwB} is used, it can be shown that the construction is functorial, say $U\colon \mc{CR} \to \mc{WB}$. However, $U$ does not preserve rings which are already wB and, hence, details will be omitted.  

Another choice of regular ring would be the complete ring of quotients, $Q(R)$ of $R\in \mc{CR}$, always regular in this case. This works well but is not functorial. There is another important regular ring between $R$ and $Q(R)$, namely the \emph{epimorphic hull}, the intersection of all regular rings between $R$ and $Q(R)$ (see \cite[Satz~11.3]{S}). However, the next lemma will show that the construction of Proposition~\ref{embedwB} gives the same result for any regular ring $S$, $R\sbq S \sbq Q(R)$. 

\begin{defn}\label{W(R)}  Let $R\in \mc{CR}$ and $Q(R)$ its complete ring of quotients. The wB ring constructed from $R$ and $Q(R)$ using Proposition~\ref{embedwB} is denoted $\mbf{W}(R)$. \end{defn}

It will be seen that an annihilator which is generated by an idempotent behaves well when passing from a ring to an essential extension ring. If $R\sbq S$ are reduced rings then the extension is called \emph{(right) essential} if every non-zero right ideal of $S$ has non-zero intersection with $R$. A right essential extension is also called a \emph{right intrinsic extension} in \cite{FU}. (Note that this is weaker than saying that $R_R$ is essential in $S_R$.)

\begin{lem}\label{essex}  Let $S\sbq T$ be reduced rings where $T$ is a right essential extension of $S$. Suppose that, for $s\in S$, there are idempotents $e\in S$ and $f\in T$ such that $\ann_S s =eS$ and $\ann_T s=fT$. Then, $e=f$. \end{lem}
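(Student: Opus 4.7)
The plan is to exploit the idempotent order together with essentiality to squeeze $f$ down to $e$. First I would observe the easy containment: since $e \in \ann_S s \subseteq \ann_T s = fT$, writing $e = ft$ gives $fe = f^2 t = ft = e$, so $e \leq f$ in the usual order on idempotents, i.e.\ $fe = ef = e$. Thus $g := f - e$ is a well-defined idempotent of $T$ (check: $g^2 = f - ef - fe + e = f - e = g$), and $g$ is orthogonal to $e$ ($ge = eg = 0$).

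The goal then reduces to showing $g = 0$. Suppose for contradiction that $g \neq 0$. Then $gT$ is a non-zero right ideal of $T$, so by right essentiality of the extension $S \subseteq T$ there exists $t \in T$ with $0 \neq gt \in S$. Since $f \in \ann_T s$ we have $sf = 0$, and of course $se = 0$ as well, so $sg = s(f-e) = 0$; hence $s(gt) = (sg)t = 0$, showing $gt \in \ann_S s = eS$. Write $gt = eu$ for some $u \in S$.

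Now multiply on the left by $g$: on the one hand $g(gt) = g^2 t = gt$, but on the other hand $g(eu) = (ge)u = 0$. Therefore $gt = 0$, contradicting the choice of $t$. So $g = 0$ and $e = f$, as required.

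I do not anticipate any serious obstacle; the only point to keep straight is that essentiality is being used for $T$ as a right essential extension of $S$ (so one may intersect a non-zero right ideal of $T$ with $S$), not as a module-theoretic essentiality over a fixed base ring. The reduced hypothesis is not even needed for this particular lemma, though the paper states it as part of the running setup.
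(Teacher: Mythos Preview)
Your argument is correct and is essentially the paper's proof, only written out in more detail: both show $e\le f$ from $e\in fT$, form $g=f-e=f-ef$, use right essentiality to find $0\ne gt\in S$, observe $s(gt)=0$ so $gt\in eS$, and then annihilate via $ge=0$.

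One correction to your closing remark: the passage from $fe=e$ to $ef=e$ (needed for $g^2=g$ and $eg=0$) is exactly where reducedness enters, since it uses that idempotents in a reduced ring are central. Your derivation $e=ft\Rightarrow fe=e$ gives only one side; without centrality there is no reason for $ef=e$, and in a non-reduced ring the lemma as stated can fail. So the reduced hypothesis is genuinely used, contrary to your final comment.
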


\begin{proof} Since $es=0$, $e=ef$. If $f\ne ef$ then there is $t\in T$ such that $0\ne (f-ef)t\in S$. However, $s(f-ef)t =sf(1-e)t =0$ so that $f(1-e)t=f(1-e)te =0$, a contradiction. Hence, $e=f$. \end{proof}

\begin{cor}\label{uniquee}  Let $R\in \mc{CR}$ and $S\in \mc{CR}$ an essential extension of $R$ which is wB.  Let $V$ be the wB ring generated in $S$ as in Proposition~\ref{embedwB}. If $r\in R$ is such that $\ann_Rr=eR$ for some idempotent $e\in R$ then $e
S= \ann_Sr$. Thus, in the construction of $V$, the idempotent for $r\in R$ is already in $R$. \end{cor}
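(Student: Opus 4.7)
The plan is to apply Lemma~\ref{essex} directly, with the roles of $S$ and $T$ in that lemma played by our $R$ and $S$ respectively. Since $R \sbq S$ is an essential extension of reduced rings and $S$ is wB, for the given element $r \in R$ we have two idempotents generating its annihilators on each side: on the $R$-side, the hypothesis supplies the idempotent $e \in R$ with $\ann_R r = eR$, and on the $S$-side, the wB property of $S$ supplies an idempotent $f \in \B(S)$ with $\ann_S r = fS$. The lemma then forces $e = f$, so that $\ann_S r = eS$, which is the first assertion.

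For the second assertion, recall that in Proposition~\ref{embedwB} the wB ring $\W(R) = V$ is obtained by adjoining to $R$ the set $\mc{S} = \{e(r) \mid r \in R\}$ of idempotents of $\B(S)$, where $e(r)$ is determined by $\ann_S r = e(r)S$. The first part of the corollary shows that whenever $\ann_R r$ is already generated in $R$ by an idempotent $e \in R$, that same $e$ generates $\ann_S r$; hence $e(r)$ may be taken equal to $e$, and no new element needs to be adjoined for such $r$.

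The only point requiring a moment of care is verifying the hypothesis of Lemma~\ref{essex}: we need both annihilators to be principal on an idempotent, which is given on the $R$-side and follows from the wB property on the $S$-side, and we need $S$ to be a (right) essential extension of $R$, which is the standing assumption. Since $R, S \in \mc{CR}$ are commutative, there is no distinction between right and two-sided essentiality, so no subtlety intervenes. There is therefore no real obstacle; the corollary is a transparent packaging of Lemma~\ref{essex} together with the explicit description of the generators of $\W(R)$ supplied by Proposition~\ref{embedwB}.
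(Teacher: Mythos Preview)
Your proof is correct and mirrors the paper's own argument, which simply says ``This is a direct application of Lemma~\ref{essex}.'' One small notational slip: you write ``$\W(R) = V$,'' but $\W(R)$ is by definition the construction inside $Q(R)$, whereas here $V$ is the analogous construction inside the given essential wB extension $S$; the two are only later shown to coincide in Proposition~\ref{uniqueW}.
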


\begin{proof} This is a direct application of Lemma~\ref{essex}. \end{proof}

Everything is now in place to show that the construction of $\W(R)$ not only gives an efficient way of constructing a wB ring from $R$ but also that if $R$ is already wB then $\W(R) =R$. 

\begin{cor}\label{W2=W}  If $R\in \mc{CR}$ is a wB ring then $\W(R) =R$. \end{cor}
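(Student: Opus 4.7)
The plan is to apply Corollary~\ref{uniquee} directly to the pair $R \subseteq Q(R)$. Recall that $\W(R)$ is built from $R$ by adjoining, inside $Q(R)$, the idempotents $e(r) \in \B(Q(R))$ satisfying $\ann_{Q(R)} r = e(r)Q(R)$ for each $r \in R$. So to show $\W(R) = R$ it suffices to verify that every such $e(r)$ already lies in $R$.

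First I would check that the construction of $\W(R)$ applies, i.e., that $Q(R)$ is indeed a wB essential extension of $R$. For $R \in \mc{CR}$, the complete ring of quotients $Q(R)$ is commutative von Neumann regular, hence wB, and $R \subseteq Q(R)$ is an essential extension by the standard property of $Q(R)$. Thus the hypotheses of Corollary~\ref{uniquee} with $S = Q(R)$ are satisfied.

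Next, fix $r \in R$. Since $R$ is wB, there is an idempotent $e \in R$ with $\ann_R r = eR$. Corollary~\ref{uniquee} (which is itself an application of Lemma~\ref{essex}) then gives $\ann_{Q(R)} r = eQ(R)$, so the unique idempotent $e(r) \in \B(Q(R))$ used in the construction of $\W(R)$ coincides with $e \in R$. Consequently the set $\mc{S} = \{e(r) \mid r \in R\}$ is contained in $R$, and therefore $\W(R) = R[\mc{S}] = R$.

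There is no real obstacle here: everything is already packaged in Corollary~\ref{uniquee}. The only minor check is that $Q(R)$ qualifies as a wB essential extension of $R$, which is immediate from the commutative reduced setting.
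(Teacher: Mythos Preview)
Your proposal is correct and is precisely the argument the paper intends: Corollary~\ref{W2=W} is stated without proof because it is meant to follow immediately from Corollary~\ref{uniquee} applied with $S=Q(R)$, and you have spelled out exactly that application. There is nothing to add.
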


\begin{prop} \label{uniqueW}  Let $R$, $S$ and $V$ be as in Corollary~\ref{uniquee}. Then, $V \cong \W(R)$.\end{prop}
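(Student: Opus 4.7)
\emph{Proof proposal.} The plan is to construct a ring isomorphism $\phi\colon V\to\W(R)$ fixing $R$ by matching, for each $r\in R$, the idempotent $e_S(r)\in\B(V)$ generating $\ann_Vr$ with the idempotent $e_Q(r)\in\B(\W(R))$ generating $\ann_{\W(R)}r$. Here $\ann_Vr=e_S(r)V$: by the construction of $V$ the idempotent $e_S(r)\in\B(S)$ generating $\ann_Sr$ lies in $V$, and an application of Lemma~\ref{essex} to $V\sbq S$ (essential, since $R\sbq V$ and $R\sbq S$ is essential) shows that $e_S(r)$ also generates $\ann_Vr$ in $V$. Similarly $\ann_{\W(R)}r=e_Q(r)\W(R)$.

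The heart of the argument is to show that this assignment extends to a well-defined ring isomorphism. Every element of $V$ has a normal form $\sum r_i\varepsilon_i$ whose $\varepsilon_i$ are orthogonal atoms of the finite boolean subalgebra generated in $\B(V)$ by some $\{e_S(r_1),\dots,e_S(r_n)\}$; an analogous normal form holds in $\W(R)$ with $e_Q(r_i)$ in place of $e_S(r_i)$. Well-definedness reduces to showing that an atom $\prod_{i\in A}e_T(r_i)\prod_{j\notin A}(1-e_T(r_j))$ behaves identically for $T=S$ and $T=Q(R)$. Using the Pierce-sheaf description of awB rings (Proposition~\ref{NRgen}) together with essentiality of $R$ in $T$, such an atom is non-zero in $T$ if and only if there exists a minimal prime $\mf{p}$ of $R$ with $r_i\in\mf{p}$ for all $i\in A$ and $r_j\notin\mf{p}$ for all $j\notin A$. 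Since this criterion depends only on $R$, the non-zero atoms of $V$ and of $\W(R)$ are in canonical bijection, and the assignment $r\mapsto r$, $e_S(r)\mapsto e_Q(r)$ extends to an $R$-algebra isomorphism $V\cong\W(R)$.

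The main obstacle is the Pierce-stalk criterion: one must show that in any wB essential extension $T$ of $R$ the contractions $Tx\cap R$ for $x\in\spec\B(T)$ are exactly the minimal primes of $R$. The forward direction uses that each $T/Tx$ is a domain (Proposition~\ref{NRgen}) together with essentiality of $R$ in $T$ to rule out non-minimal contractions; the converse lifts each minimal prime $\mf{p}$ of $R$ to a suitable $x\in\spec\B(T)$, exploiting the canonical identification $\minspec T\cong\spec\B(T)$ for wB $T$ (via $\mf{q}\mapsto\mf{q}\cap\B(T)$) and the essentiality of $R$ in $T$ to guarantee that some minimal prime of $T$ contracts to $\mf{p}$. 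Once this correspondence is in hand, the matching of normal forms, and hence the isomorphism $V\cong\W(R)$, is formal.
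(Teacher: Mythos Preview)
Your approach is workable but takes a substantially longer and genuinely different route than the paper. The paper's proof is essentially three lines: by Storrer's theorem there is a copy of $Q(R)$ inside $Q(S)$, so both $V$ and $\W(R)$ live in the common regular ring $Q(S)$; Lemma~\ref{essex}, applied once to the essential pair $V\sbq Q(S)$ and once to $Q(R)\sbq Q(S)$, shows that for each $r\in R$ the idempotents $e_S(r)$ and $e_Q(r)$ are literally the \emph{same} element of $Q(S)$. Hence $V$ and $\W(R)$ are equal as subrings of $Q(S)$, and no abstract isomorphism need be built at all.

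Your route avoids the Storrer embedding and is more intrinsic, but the price is real. The step you flag as the main obstacle --- that $Tx\cap R$ is a \emph{minimal} prime of $R$ for each $x\in\spec\B(T)$ --- is not settled by the phrase ``essentiality rules out non-minimal contractions'': the naive attempt (pull an annihilator witness $t\in\ann_T a\setminus\mf{P}$ down to $R$ via $tT\cap R\ne 0$) produces an element of $\ann_R a$ but gives no control over whether it lies outside $\mf{p}$. One clean way through is to prove directly that for essential $R\sbq T$ and $a\in R$ one has $\ann_R(\ann_R a)=\ann_T(\ann_T a)\cap R$ (a short computation using reducedness and essentiality); from this it follows that $\varepsilon_T T\cap R=\bigcap_{i\in A}\ann_R r_i\;\cap\;\bigcap_{j\notin A}\ann_R(\ann_R r_j)$, which depends only on $R$, so by essentiality the nonvanishing of each atom is independent of $T$. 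With that in hand your normal-form matching and the well-definedness of the ring map go through. So your argument can be completed, but the paper's device of passing to a common overring makes all of this bookkeeping unnecessary.
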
 

\begin{proof} There is a copy of $Q(R)$ in $Q(S)$ (\cite[Satz~10.1]{S}). The wB ring $\W(R)$ can be calculated inside this copy of $Q(R)$. Now consider $r\in R$ and the idempotents $e$ and $f$ where $eV = \ann_Vr$ and $fQ(R) = \ann_{Q(R)} r$.  Then, $\ann_{Q(S)}r =eQ(S)$ and Lemma~\ref{essex} shows that $e=f$. Hence, $V$ and $\W(R)$ are generated over $R$ by the same set of idempotents. \end{proof}

The construction of $\W(R)$ can be seen, using Lemma~\ref{essex}, to be an example of a wB \emph{absolute to $Q(R)$ (right) ring hull} of $R$ for any  commutative reduced ring, in the language of  \cite[Definition~8.2.1(i)]{BPR}. In the case of a general reduced ring $R$, the construction of Proposition~\ref{embedwB} along with Lemma~\ref{essex} will yield a wB right ring hull of $R$ (\cite[Definition~8.2.1(iii)]{BPR})  \emph{if there is any} essential right wB extension $T$ of $R$.  

The process of constructing $\W(R)$ is illustrated by the following. 

\begin{prop} \label{qclreg}  Suppose $R\in \mc{CR}$ is such that its classical ring of quotients $Q=\qcl(R)$ is regular. Then $\W(R)$ is the ring generated over $R$ by $\B(Q)$.  \end{prop}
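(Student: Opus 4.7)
The plan is to apply Proposition~\ref{uniqueW} with $S=Q=\qcl(R)$ and then show that the idempotent set $\mc{S}$ produced by the construction of Proposition~\ref{embedwB} generates the same $R$-algebra as all of $\B(Q)$. This reduces the verification to checking (a)~$Q$ is an essential wB extension of $R$, and (b)~every idempotent of $Q$ already appears as an annihilator idempotent of some element of $R$.

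First, I would verify that $Q=\qcl(R)$ is a wB extension of $R$. Since $Q$ is (commutative) regular, every principal ideal $rQ$ is generated by an idempotent $e=rr'$ where $rr'r=r$, and then $\ann_Q r=(1-e)Q$, so $Q$ is wB. For essentiality, given $0\ne q\in Q$, write $q=as^{-1}$ with $a\in R$ and $s$ a regular element of $R$; then $s$ is a unit of $Q$, so $qs=a\in R$ is a nonzero element lying in any ideal of $Q$ that contains $q$. Hence $R\sbq Q$ is essential. By Proposition~\ref{uniqueW}, the ring $V$ generated over $R$ by the set $\mc{S}=\{e(r)\mid r\in R\}$, where $\ann_Q r=e(r)Q$, is isomorphic to $\W(R)$.

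Next I would show $R[\mc{S}]=R[\B(Q)]$. The inclusion $R[\mc{S}]\sbq R[\B(Q)]$ is immediate because $\mc{S}\sbq \B(Q)$. For the reverse inclusion, let $f\in \B(Q)$ and write $f=rs^{-1}$ with $r\in R$ and $s$ a regular element of $R$. Since $s$ is a unit in $Q$, we have $rQ=fsQ=fQ$. In the regular ring $Q$ the unique idempotent generating the principal ideal $rQ$ is $1-e(r)$ (from the computation $e=rr'$ above), so $f=1-e(r)\in R[\mc{S}]$. Thus $\B(Q)\sbq R[\mc{S}]$, which gives $R[\B(Q)]\sbq R[\mc{S}]$.

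Combining these yields $\W(R)\cong V=R[\mc{S}]=R[\B(Q)]$, which is the claim. The only part requiring any real thought is identifying $f=rs^{-1}\in\B(Q)$ with the annihilator-idempotent of $r$; once one notices that in a commutative regular ring the idempotents generating $rQ$ and $\ann_Q r$ are complementary, the argument reduces to routine bookkeeping, and all the heavy lifting has already been done by Propositions~\ref{embedwB} and~\ref{uniqueW}.
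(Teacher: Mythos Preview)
Your proof is correct and follows essentially the same route as the paper's, only with more detail. The paper's two-sentence argument observes that $\ann_Q(a/b)=(\ann_R a)Q=\ann_Q a$, so every annihilator idempotent of an element of $Q$ is already the annihilator idempotent of an element of $R$; since $Q$ is regular, every $f\in\B(Q)$ arises this way (as $1-f$ is the annihilator idempotent of $f$), and hence all of $\B(Q)$ is adjoined in the construction of Proposition~\ref{embedwB}. Your computation $f=1-e(r)$ for $r$ a numerator of $f$ is exactly the dual formulation of this, and your explicit invocation of Proposition~\ref{uniqueW} (together with the verification that $\qcl(R)$ is an essential wB extension) fills in what the paper leaves implicit.
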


\begin{proof} In this case for $r\in R$, $\ann_Qr = (\ann_Rr ) Q$ and for each $a/b\in Q$, $\ann_Qa/b= (\ann_Ra)Q$. Hence, in Proposition~\ref{embedwB}, all the idempotents of $Q$ are adjoined. \end{proof}

\setcounter{section}{3}\setcounter{thm}{0}
\noindent\tbf{3. Rings of continuous functions.} In this section examples of \rr-good rings will be found which are quite unlike wB rings. The basic reference is \cite{GJ}.

Suppose $X$ is a completely regular space and $\cx$ is considered with the \tbf{rr}-order.  What does it mean for $\cx$ to be \rr-good?  Let $f,g\in \cx$ and suppose $h=f\wrr g$ exists. Then where $h$ is non-zero it must coincide with $f$ and $g$.  This means that $\coz h \sbq \z (f-g)\cap \coz f$. (Here $\coz f$ and $\z(f)$ have their usual meanings in rings of functions.) Moreover, on $\coz h$, $h$ coincides with $f$ and with $g$. These conditions only make $h \rro f$ and $h\rro g$. To have the infimum there must be a \emph{unique} largest cozero set $U\sbq \z (f-g) \cap \coz f$ and $h$ with (i)~$\coz h =U$, (ii)~$h$ and $f$ (and $g$) coincide on $U$, and  (iii)~for every net $\Lambda =\{\lambda_\alpha\}$ in $U$ converging to a point $y$ on the boundary of $U$, $\{f(\lambda_\alpha)\} \to f(y) =0$ and $\{g(\lambda_\alpha)\} \to g(y) =0$. Notation: For $Y\sbq X$ the boundary of $Y$ is denoted $\bd Y =\ov{Y}\setminus Y$.

\begin{defn} \label{xgood}  A completely regular topological space $X$ is called \rr-\emph{good} if the ring $\cx$ is \rr-good.\end{defn}

 As an example consider a pair of functions from $\text{C}(\R)$. (It turns out that $\R$ is an \rr-good space.) Let $f(x) = \sin x$ and $g(x) = |\sin x|$. The function $h(x)$ which is $f(x)$ where the two functions coincide and zero where they differ is continuous. Hence, $h = f\wrr g$. 
 
The purpose here is to find examples of \rr-good spaces.  It will be seen later that it is easy to construct spaces which are not \rr-good. Before looking at details it is worthwhile to note a class of \tbf{rr}-good spaces; more will follow. Recall that a space $X$ is \emph{basically disconnected} if the closure of a cozero set is open. As already mentioned these are precisely the spaces where $\cx$ is wB and hence \tbf{rr}-good by Theorem~\ref{awB}. These spaces include the P-spaces (\cite[4J]{GJ}) where $\cx$ is von Neumann regular. Note that a space can be totally disconnected without being \tbf{rr}-good: the space $X=\beta\N\setminus \N$ is an example where $\cx$ is awB but not wB (\cite[Example~3.2]{NR}) and, by Theorem~\ref{awB} again, $X$ is not \tbf{rr}-good. 

A complete characterization of all \rr-good spaces as a class of well-known spaces is elusive since examples include basically disconnected spaces and, as will be seen, locally connected spaces. However, Theorem~\ref{finfg} does characterize \rr-good spaces although by properties not easily translated into global language.
It makes more explicit what it means for $f$ and $g$ in $\cx$ to have a non-zero infimum. The description will later be used to get sufficient conditions for $X$ to be \tbf{rr}-good.

\begin{thm} \label{finfg}  Let $X$ be a completely regular topological space. Fix $f,g\in\cx$, $f \ne g$, and suppose that $f$ and $g$ have a non-zero \rr-lower bound. Let $\{h_\alpha\}_{\alpha\in A}$ be the set of non-zero \tbf{rr}-lower bounds of $f$ and $g$, $U_\alpha = \coz h_\alpha$, $\alpha\in A$ and $U=\bigcup_{\alpha\in A}U_\alpha$

(i)~Then, $f\wrr g$ exists if and only if $U$ has the property: ($*$)~if $x\in \bd U$ and $N$ is any neighbourhood of $x$ then, for some $\beta \in A$, $N\cap \bd U_\beta \ne \vide$. 

(ii)~Moreover, $f\wrr g$ exists if and only if $U$ has the property: ($**$)~if $x\in \bd U$ then for each $\varep > 0$ there is a neighbourhood $N_\varep$ of $x$ such that for all $\alpha \in A$, $N_\varep \cap U_\alpha \sbq \{y\in X\mid |h_\alpha(y)| < \varep\}$.  \end{thm}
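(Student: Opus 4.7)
The plan is to identify the canonical candidate for $f\wrr g$ and then show that (*) and (**) each characterise exactly when it lies in $\cx$. Define $h\colon X\to\R$ by $h(y)=f(y)$ for $y\in U$ and $h(y)=0$ for $y\in X\setminus U$. If $h$ is continuous then on each $U_\alpha$ it equals $f=g$, so $h\rro f$ and $h\rro g$; moreover for every member $h_\alpha$ of the family, $U_\alpha\sbq U$ and $h_\alpha=f=h$ on $U_\alpha$, giving $h_\alpha\rro h$. Thus continuity of $h$ is equivalent to the existence of $f\wrr g$, and $h=f\wrr g$ whenever it holds. A preliminary observation drives both characterisations: each $h_\alpha$ is continuous, vanishes off $U_\alpha$, and equals $f$ on $U_\alpha$, so by taking nets in $U_\alpha$ approaching any point of $\bd U_\alpha$ one obtains the key inclusion $\bigcup_{\alpha}\bd U_\alpha\sbq\z(f)$.

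For the forward direction of (i), if $f\wrr g$ exists then it is itself a member $h_{\beta_0}$ of $\{h_\alpha\}$. The relation $h_\alpha\rro h_{\beta_0}$ forces $h_\alpha(y)\in\{0,h_{\beta_0}(y)\}$ pointwise, so $U_\alpha\sbq U_{\beta_0}$ for every $\alpha$ and hence $U=U_{\beta_0}$. Then (*) holds trivially with $\beta=\beta_0$, since for $x\in\bd U=\bd U_{\beta_0}$ and any neighbourhood $N$ of $x$, the point $x$ itself belongs to $N\cap\bd U_{\beta_0}$.

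For the converse in (i), assume (*) and fix $x\in\bd U$. For any neighbourhood $N$ of $x$, (*) yields $\beta$ and a point $y\in N\cap\bd U_\beta\sbq\z(f)$, so $f$ has a zero in every neighbourhood of $x$ and continuity of $f$ forces $f(x)=0$. Continuity of $h$ then follows: on the open sets $U$ and $X\setminus\ov U$ it agrees respectively with $f$ and with $0$, and at any $x\in\bd U$ we have $h(x)=0$ together with $h(y)\to f(x)=0$ as $y\to x$ through $U$ (by continuity of $f$) and $h(y)=0$ for $y\to x$ outside $U$. Thus $h\in\cx$ and, by the first paragraph, $h=f\wrr g$. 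This step is the main obstacle: the passage from the indirect density condition (*) to the pointwise equation $f(x)=0$ on $\bd U$, which relies crucially on the preliminary inclusion $\bigcup\bd U_\alpha\sbq\z(f)$.

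For (ii), if $f\wrr g=h_{\beta_0}$ exists then again $U=U_{\beta_0}$; continuity of $h_{\beta_0}$ at $x\in\bd U$, where $h_{\beta_0}(x)=0$, gives for each $\varep>0$ a neighbourhood $N_\varep$ with $|h_{\beta_0}|<\varep$ on $N_\varep$, and since $h_\alpha=h_{\beta_0}$ on $U_\alpha\sbq U$, (**) follows. Conversely, (**) is precisely what is needed to make the candidate $h$ continuous at boundary points: every $y\in N_\varep\cap U$ lies in some $U_\alpha$, so $|h(y)|=|h_\alpha(y)|<\varep$ by (**), while $h\equiv 0$ off $U$ gives $|h|<\varep$ throughout $N_\varep$. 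Hence $h\in\cx$ and realises $f\wrr g$.
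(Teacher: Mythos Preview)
Your proof is correct and follows essentially the same strategy as the paper's: define the candidate $h$ to be $f$ on $U$ and $0$ elsewhere, reduce the existence of $f\wrr g$ to the continuity of $h$ at points of $\bd U$, and use the inclusion $\bd U_\alpha\sbq\z(f)$ to pass between $(*)$ and continuity. Your treatment is in fact more explicit than the paper's in two places: you spell out why $U=U_{\beta_0}$ when the infimum exists (the paper just asserts $\coz h=U$), and you give a full argument for both directions of (ii), whereas the paper simply says the proof is along the same lines as (i).
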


\begin{proof}  (i)~Note, with $f$ and $g$ as above, that if $\bd U=\vide$ then $U$ is clopen and there is $e^2=e \in \cx$ such that $\coz e =U$. In this case $f\wrr g = ef =eg$ and $(*)$ is vacuously satisfied. 

Notice that, as in the statement, $h_\alpha$ coincides with $f$ and with $g$ on $\coz h_\alpha$; hence, $\coz h_\alpha\sbq \z(f-g) \cap \coz f$. Moreover, if $\alpha \ne \beta$ in $A$, then $(\bd U_\alpha) \cap U_\beta = \vide$. This is because for $y\in \bd U_\alpha$, $h_\alpha(y) =f(y) =0$ while for $x\in U_\beta$, $f(x) = h_\beta (x) \ne 0$. 

First assume property ($*$) for $f$ and $g$ which have a non-zero lower bound. The proof will show that, among the lower bounds, there is a maximal one.   With the $h_\alpha$ as in the statement, define $h$ by $h(x) = f(x)$ for $x\in U$ and $h(x) =0$ for $x\in X\setminus U$. The continuity of $h$ on $\bd U$ must be shown. The condition ($*$) says that for every neighbourhood $N$ of  $x\in \bd U$ there is some $\beta\in A$ with $N\cap \bd U_\beta \ne \vide$. Thus there is $y\in N$ with $f(y) =0$. By continuity of $f$ it follows that $f(x)=0$; the fact that $h$ is zero on $X\setminus U$ shows that $h$ is continuous at $x$ and that $h$ coincides with $f$ (and with $g$) on $U$. By construction $h\rro f$ and $h\rro g$ and, also, for each $\alpha \in A$, $h_\alpha \rro h$. 

In the other direction, if $h= f\wrr g$ exists it is one of the $h_\alpha$ showing that $\coz h = U$. By its continuity, the property ($*$) follows automatically. 

The proof of part (ii) is along the same lines as that of (i). \end{proof}

The next result gives a sufficient condition on a space $X$ to be an \rr-good space. For convenience, the condition will be given a name.

\begin{defn}\label{b-cond}  Let $X$ be a completely regular space. Let $\{U_\alpha\}_{\alpha \in A}$ be any family of non-empty cozero sets in $X$ with the following property: for $\alpha \ne \beta$ in $A$, $(\bd U_\alpha) \cap U_\beta = \vide$. The space $X$ is said to satisfy the \emph{B-property} (for \emph{boundary property}) if the following holds for each such family of cozero sets. Let $z\in \bd(\bigcup_{\alpha\in A}U_\alpha)$ then for every neighbourhood $N$ of $z$ there is $\beta \in A$ such that $N\cap \bd U_\beta \ne \vide$. \end{defn}

\begin{cor} \label{rrgood} Let $X$ be a completely regular topological space which satisfies the B-property.   Then $X$ is an \rr-good space.  \end{cor}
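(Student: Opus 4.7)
The plan is to reduce the corollary directly to Theorem~\ref{finfg} by verifying its hypothesis $(*)$ from the B-property. Given $f,g\in\cx$, I need to show that $f\wrr g$ exists. The case $f=g$ is trivial (the infimum is $f$), and if $f\ne g$ have no non-zero \rr-lower bound, then $f\wrr g=0$ by definition. So assume $f\ne g$ admit some non-zero \rr-lower bound, and let $\{h_\alpha\}_{\alpha\in A}$ denote the set of all such non-zero lower bounds, with $U_\alpha=\coz h_\alpha$ and $U=\bigcup_{\alpha\in A}U_\alpha$.

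The key observation, already recorded inside the proof of Theorem~\ref{finfg}, is that the family $\{U_\alpha\}_{\alpha\in A}$ automatically satisfies the disjointness condition required to invoke the B-property: for $\alpha\ne\beta$, if $y\in\bd U_\alpha$ then $h_\alpha(y)=0$, and since $h_\alpha$ coincides with $f$ on $\coz h_\alpha$ and $f$ is continuous, $f(y)=0$; on the other hand, for $x\in U_\beta$ we have $f(x)=h_\beta(x)\ne 0$, so $(\bd U_\alpha)\cap U_\beta=\vide$. Thus $\{U_\alpha\}_{\alpha\in A}$ is a family of non-empty cozero sets of the sort to which the B-property applies.

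Now I would apply the B-property to this family: for any $z\in\bd U=\bd\bigl(\bigcup_{\alpha\in A}U_\alpha\bigr)$ and any neighbourhood $N$ of $z$, there exists $\beta\in A$ with $N\cap\bd U_\beta\ne\vide$. But this is exactly condition $(*)$ of Theorem~\ref{finfg}(i), so that theorem yields the existence of $f\wrr g$. Since $f,g$ were arbitrary, $\cx$ is an \rr-lower semi-lattice, i.e., $X$ is \rr-good.

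There is no substantial obstacle here: the entire proof is a bookkeeping exercise matching the hypothesis of the B-property (Definition~\ref{b-cond}) to the setup of Theorem~\ref{finfg}. The only subtlety worth flagging explicitly is the disjointness property $(\bd U_\alpha)\cap U_\beta=\vide$, which is what lets us feed the family $\{\coz h_\alpha\}$ into the B-property; once that is noted, the rest is immediate.
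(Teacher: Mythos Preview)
Your proposal is correct and follows exactly the route of the paper's proof: the paper simply remarks that the B-property is a strengthening of condition~$(*)$ in Theorem~\ref{finfg}, whence $f\wrr g$ exists whenever $f,g$ have a non-zero lower bound (and $f\wrr g=0$ otherwise). You have just made the one implicit step explicit, namely that the family $\{\coz h_\alpha\}$ of cozero sets of the non-zero lower bounds satisfies the disjointness hypothesis $(\bd U_\alpha)\cap U_\beta=\vide$ required to invoke the B-property, which the paper had already recorded inside the proof of Theorem~\ref{finfg}.
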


\begin{proof} The B-property is a stronger form (strictly stronger by Example~\ref{notB}, below) of the property~($*$) in Theorem~\ref{finfg}. This guarantees the existance of $f\wrr g$ whenever $f$ and $g$ have a non-zero lower bound. Otherwise, $f\wrr g =0$.\end{proof}

The next task is to find examples of spaces satisfying the conditions of Corollary~\ref{rrgood}.  Recall the definition of a locally connected space (e.g., \cite[page~374]{E}): $X$ is \emph{locally connected} if for each $x\in X$ and a neighbourhood $N$ of $x$ there is a connected subset $C\sbq N$ such that $x\in \inte (C)$. 

\begin{thm} \label{locc}  Let $X$ be a completely regular space. (1)~If $X$ is locally connected then $X$ is an \rr-good space. (2)~A connected space $X$ need not be \rr-good even when $X$ is a compact metric space.  \end{thm}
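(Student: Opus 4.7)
For part (1), the plan is to verify that every locally connected space satisfies the B-property of Definition~\ref{b-cond} and then apply Corollary~\ref{rrgood}. Given a family $\{U_\alpha\}$ of non-empty cozero sets of $X$ satisfying the boundary-disjointness hypothesis, a point $z \in \bd\bigcup_\alpha U_\alpha$, and a neighborhood $N$ of $z$, use local connectedness to choose a connected open $V \sbq N$ containing $z$. Because $z$ belongs to $\ov{\bigcup U_\alpha}$ but not to $\bigcup U_\alpha$, some $U_\gamma$ meets $V$ without containing $z$, so $V \cap U_\gamma$ is a non-empty proper open subset of the connected set $V$ and therefore cannot be closed in $V$; this yields a point of $V \cap \bd U_\gamma$, as required. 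Corollary~\ref{rrgood} then shows $X$ is \rr-good.

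For part (2), the plan is to produce a compact connected metric space where $(*)$ of Theorem~\ref{finfg}(i) fails. I would work with the planar comb
\begin{equation*}
X \;=\; ([0,1] \times \{0\}) \cup (\{0\} \times [0,1]) \cup \bigcup_{n \geq 1}(\{1/n\} \times [0,1]),
\end{equation*}
writing $T_0 = \{0\} \times [0,1]$ and $T_n = \{1/n\} \times [0,1]$; this is compact, connected, metric, and fails to be locally connected along $T_0 \setminus \{(0,0)\}$. Let $f(x,y) = y$, and define $g \in \cx$ by $g(1/(2k),y) = y$, $g(1/(2k+1),y) = y + 1/(2k+1)$, $g(0,y) = y$, extended continuously on the base via piecewise linear interpolation so that $g(x,0)$ vanishes on $[0,1]$ exactly on $\{0\} \cup \{1/(2k) : k \geq 1\}$; the convergence $g(1/n,y) \to y$ gives continuity at every point of $T_0$.

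The key step is the description of the cozero sets $U_\alpha = \coz h_\alpha$ of non-zero \rr-lower bounds $h_\alpha$ of $f$ and $g$. Since each odd tooth $T_{2k+1}$ is disjoint from $\{f = g\}$, a continuous $h$ whose cozero set contained a point $(0,y_0) \in T_0$ with $y_0 > 0$ would satisfy $h(0,y_0) = y_0$ while vanishing along the sequence $(1/(2k+1),y_0) \to (0,y_0)$, contradicting continuity. A parallel single-tooth analysis, using that on the segment $T_{2k} \cong [0,1]$ the boundary of the relevant open set must sit inside $\{(1/(2k),0)\}$, shows that $\coz h \cap T_{2k}$ is either empty or all of $T_{2k} \setminus \{y = 0\}$; and the same accumulation obstruction along $T_0$ forces the index set to be finite. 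Therefore every $U_\alpha = \bigcup_{k \in K_\alpha}(T_{2k} \setminus \{y=0\})$ with $K_\alpha \sbq \N$ finite, and in particular every $\bd U_\alpha$ is a finite subset of the $x$-axis $\{y = 0\}$.

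Finally, $U = \bigcup_\alpha U_\alpha = \bigcup_{k \geq 1}(T_{2k} \setminus \{y=0\})$ has $T_0 \sbq \bd U$, yet the neighborhood $N = X \cap \{y > y_0/2\}$ of any $(0,y_0) \in T_0$ with $y_0 > 0$ contains no point with $y = 0$ and so misses every $\bd U_\alpha$. Thus $(*)$ of Theorem~\ref{finfg}(i) fails at $(0,y_0)$, so $f \wrr g$ does not exist in $\cx$, and $X$ is not \rr-good. The most delicate step I expect is the rigid description of the admissible cozero sets in the previous paragraph, where the failure of local connectedness along $T_0$ is used decisively to block any lower bound from ``reaching'' the limit tooth.
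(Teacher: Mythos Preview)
Your proof of part~(1) is correct and essentially identical to the paper's: both verify the B-property by trapping a connected neighbourhood between a $U_\gamma$ it meets and the point $z$ it must miss, forcing a boundary point of $U_\gamma$ inside $N$.

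For part~(2) your argument is correct but uses a different witness. The paper builds a ``ladder'' in $\R^2$---two vertical segments at $x=0$ and $x=1$ joined by horizontal rungs at heights $y=1/n$ and $y=0$---and takes $f$ to be the same tent function on every rung while $g$ is a slightly rescaled tent on the odd rungs; it then observes, rather tersely, that the typical lower bounds live on finitely many even rungs and that continuity along the rung $y=0$ blocks a supremum among them. Your comb example plays the same game with the roles of ``rungs'' and ``limit segment'' rotated, and you carry out in detail what the paper only sketches: you classify \emph{all} nonzero lower bounds (cozero set a finite union of open even teeth), compute $U$ and the $\bd U_\alpha$ explicitly, and then exhibit a neighbourhood of a point of $T_0$ that misses every $\bd U_\alpha$, so that condition~$(*)$ of Theorem~\ref{finfg}(i) fails outright. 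The trade-off is that the paper's example is marginally quicker to write down (the symmetry of the tents makes the ``finitely many even rungs'' description of lower bounds almost immediate), while your route makes the failure of~$(*)$ completely transparent and ties the obstruction visibly to the non--local-connectedness along $T_0$. Both approaches rest on the same mechanism: lower bounds can only occupy finitely many ``even'' pieces, so their boundaries stay away from the limit segment, yet the union $U$ acquires that segment in its boundary.
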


\begin{proof}  (1)~It will be shown that $X$ satisfies the B-property. Suppose $\{U_\alpha\}_{\alpha \in A}$ is a family of non-empty open sets in $X$ as in  the first part of Definition~\ref{b-cond}. Let $z\in \bd(\bigcup_{\alpha\in A}U_\alpha)$. It may be supposed that $z\notin \bd U_\beta$ for any $\beta \in A$. Choose a neighbourhood $N$ of $z$.  By \cite[6.3.3]{E}, there is a connected subset $C$ of $N$ with $z\in \inte C$. Then for some $\beta \in A$, $\inte C\cap U_\beta \ne \vide$. If $C\cap \bd U_\beta \ne \vide$ then there is nothing to prove. Otherwise, $C\cap \ov{U_\beta} = C\cap U_\beta$. This means that $C\cap U_\beta$ is clopen in $C$.  Moreover, $C\cap U_\beta \ne \vide$ and $C\cap U_\beta \ne C$ since $z\notin U_\beta$. This contradicts the fact that $C$ is connected. Hence, $N\cap \bd U_\beta \ne \vide$.

(2)~For the construction, \cite[Example~27.8(b)]{W} is used but truncated to make it compact. Define $X\sbq \R^2$ as follows. It consists of the vertical line segments $\{(0,y)\mid -1\le y\le 1\}$ and $\{(1,y)\mid -1\le y\le 1\}$ along with horizontal line segments $\{(x,1/n)\mid 0\le x\le 1\}$, for $n\in \Z\setminus \{0\}$, and $\{(x,0)\mid 0\le x\le 1\}$.  Elements $f,g\in \cx$ will be defined so that they do not have an \rr-inf.

\begin{equation*} f(x,y) = \begin{cases}2x & \text{if}\;0\le x\le 1/2, y=0 \;\text{or}\; y=1/n, n\in \Z\setminus\{0\}  \\ -2x+2&  \text{if}\; 1/2\le x\le 1, y=0 \;\text{or}\; y=1/n, n\in \Z\setminus \{0\}\\ 0& \text{otherwise} \end{cases} \end{equation*}
 
\begin{equation*}g(x,y) = \begin{cases}f(x,y) &  \text{if}\, x=0, x=1, \,\text{or}\,y=0 \;\text{or}\, y=1/n,\,n\in \Z\setminus \{0\}, n\;\text{even}\\ \frac{2|n|}{|n|+1}x &\text{if}\; 0\le x\le 1/2, y=1/n, n\in \Z\;\text{odd}\\ \frac{-2|n|}{|n|+1}x+ \frac{2|n|}{|n|+1}& \text{if}\; 1/2\le x\le 1, y=1/n, n\in \Z \;\text{odd} \end{cases} \end{equation*}
A typical \rr-lower bound of $f$ and $g$ coincides with $f$ and $g$ on finitely many of the horizontal lines, say with $y\in \{1/n_1,\ldots, 1/n_k\}$, the $n_i\ne 0$ and even, and zero elsewhere. However, continuity on the line where $y=0$ precludes a greatest \rr-lower bound. 
\end{proof}

The next result shows how to construct examples of spaces which are not \rr-good.

\begin{prop} \label{notgood}  Let $X$ be a completely regular topological space with the following property: there is a sequence of disjoint clopen subsets $\{C_n\}_{n\in \N}$ such that $U=\bigcup_{n\in \N} C_n$ is not closed in $X$ and, for some $x\in \bd U$, every neighbourhood of $x$ meets all but finitely many of the $C_n$. Then, $X$ is not an \rr-good space.  \end{prop}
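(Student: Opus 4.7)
The plan is to construct $f, g \in \cx$ having non-zero $\rr$-lower bounds but no infimum, and to deduce the non-existence from Theorem~\ref{finfg}(i). The key trick is to take $f = 1$: this forces every $\rr$-lower bound of $f$ and $g$ to be idempotent, so its cozero set is automatically clopen; then condition $(*)$ of Theorem~\ref{finfg}(i) fails as soon as the union of all such cozero sets has any boundary point, and the accumulation hypothesis at $x$ will supply one.

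After re-indexing I may assume every $C_n$ is non-empty, since otherwise $U$ would be a finite union of clopen sets and hence clopen. Partition $\N = S \cup T$ into two disjoint infinite sets, and for each $n$ let $e_n \in \cx$ be the idempotent with $\coz e_n = C_n$. I would set $f \equiv 1$ and, pointwise, $g = 1 + \sum_{n \in S} (1/n) e_n$, which is well-defined because the $C_n$ are pairwise disjoint: $g(y) = 1 + 1/n$ when $y \in C_n$ with $n \in S$, and $g(y) = 1$ otherwise. Continuity of $g$ on $U$ is immediate since $g$ is locally constant on each open $C_n$. For $y \notin U$ and $\varepsilon > 0$, the finite set $\bigcup\{C_n : n \in S,\ n \le 1/\varepsilon\}$ is closed and does not contain $y$, so its complement is an open neighborhood on which $|g - 1| < \varepsilon$. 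Hence $g \in \cx$ and $g \ne f$.

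Because $f = 1$, the relation $h \rro f$ forces $h = h^2$, so every lower bound $h$ is idempotent and $\coz h$ is clopen; then $h \rro g$ rewrites as $h(g - 1) = 0$, equivalently $\coz h \cap \bigcup_{n \in S} C_n = \vide$. In particular, for every $n \in T$, $e_n$ is a non-zero lower bound. Let $\{h_\alpha\}_{\alpha \in A}$ enumerate all non-zero $\rr$-lower bounds, put $U_\alpha = \coz h_\alpha$ and $U^* = \bigcup_\alpha U_\alpha$; then $\bd U_\alpha = \vide$ for every $\alpha$ by clopenness.

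It remains to verify $x \in \bd U^*$. Since $T$ is infinite and every neighborhood of $x$ meets all but finitely many of the $C_n$, every neighborhood of $x$ meets some $C_m \subseteq U^*$ with $m \in T$; hence $x \in \ov{U^*}$. On the other hand, if $x$ lay in some $\coz h_\alpha$, then $\coz h_\alpha$ would be a clopen neighborhood of $x$ disjoint from every $C_n$ with $n \in S$, contradicting the accumulation hypothesis once $n \in S$ is taken large enough. So $x \in \bd U^*$ while every $\bd U_\alpha = \vide$; property $(*)$ of Theorem~\ref{finfg} fails, no infimum exists, and $\cx$ is not $\rr$-good. The only step that takes any care is continuity of $g$ off $U$, and this dissolves as soon as one uses that finite unions of clopen sets are closed; the substantive idea is simply the choice $f = 1$, which locks every lower bound into idempotent form.
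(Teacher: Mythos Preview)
Your proof is correct and follows essentially the same approach as the paper's: the same construction $f\equiv 1$, $g=1+\sum_{n\in S}(1/n)e_n$ (the paper uses $S=\{\text{odd }n\}$), and the same appeal to the failure of condition~($*$) in Theorem~\ref{finfg}. If anything your argument is more complete, since you make explicit that every \rr-lower bound of $f=1$ must be idempotent (hence has clopen cozero set with empty boundary) and you verify directly that the boundary point $x$ lies in $\bd U^*$, two steps the paper leaves implicit.
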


\begin{proof} The proof of Theorem~\ref{locc} is a model. Define two elements of $\cx$ as follows: $f$ is constantly 1 on $X$, and $g$ is constantly 1 except on $C_n$, $n$ odd, where it is constantly $1+1/n$. It will be shown that $g$ is continuous. Put $U = \bigcup_{n\in\N} C_n$ and $V= \inte (X\setminus U) $. Clearly $g$ is continuous on $U\cup V$. Let $z\in \bd U$. Given $\varep >0$ there is $m\in \N$ such that for all $n>m$, $1/n <\varep$. Then for  $x\in X\setminus \bigcup_{1\le p\le m}C_p$, $|g(x) -g(z)| <\varep$. This shows continuity at all points of $X$. 

It will be shown that $f\wrr g$ does not exist. There are lower bounds. In fact, any function $h$ which is 0 everywhere except on finitely many $C_n$, $n$ even, where it is 1, is continuous and a lower bound. However, since, for each $n\in\N$, $\bd C_n=\vide$, condition $(*)$ of Theorem~\ref{finfg} fails at $x$ (as does condition $(**)$). \end{proof}

 Proposition~\ref{notgood} has many corollaries. Some of them are now listed.
  
 \begin{exs} \label{notrr}  The following are examples of spaces $X$ which are not \rr-good.  (1)~Let $X$ be the one-point compactification of a disjoint union of an infinite collection of compact spaces; e.g., the one-point compactification of an infinite discrete space.  (2)~Let $X$ be any dense subset of $\R$ such that $\R\setminus X$ has an accumulation point in $X$. If $X$ is as stated then $X^n$ is also not \rr-good. (3)~Let $\{F_n\}_{n\in \N}$ be a set of closed subsets of $\R^n$ which are pairwise separated by open sets, $U=\bigcup_{n\in \N}F_n$ is not closed, and for $x\in \bd U$ there is a sequence $\{x_n\}_{n\in \N}$ converging to $x$ with $x_n \in F_n$; then let $X = \ov{\bigcup_{n\in \N}F_n}$ as a subspace of $\R^n$. 
   \end{exs}

\begin{proof} These can be seen to be special cases of Proposition~\ref{notgood}.   \end{proof}

It is next seen that the B-condition is not necessary for a space to be \tbf{rr}-good. 

\begin{ex}\label{notB}  Let $Y$ be an uncountable discrete space and $X= Y\cup \{s\}$ where the neighbourhoods of $s$ are co-countable in $X$. Then, $X$ is an \tbf{rr}-good space which does not satisfy the B-condition.   \end{ex}

\begin{proof} According to \cite[4N]{GJ}, $X$ is a P-space ($\cx$ is regular) and, hence, \tbf{rr}-good. Now partition $Y$ into countable subsets $\{Y_\alpha\}_{\alpha \in A}$ and, for each $\alpha \in A$, let $e_\alpha$ be the idempotent whose support is $Y_\alpha$. Each $Y_\alpha$ is thus a cozero set in $X$ with empty boundary. However, $\bigcup_AY_\alpha = Y$ whose boundary is $\{s\}$. Hence, the B-condition fails. \end{proof}

Recall that for a completely regular space $X$, $C^*(X)$ is the ring of bounded real-valued continuous functions on $X$. 
\begin{prop}\label{c*x}  Let $X$ be a completely regular topological space.  Then $X$ is  \rr-good  if and only if $\text{C\,}^*(X)$ is an \rr-good ring.    \end{prop}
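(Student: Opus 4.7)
The plan is to exploit two observations: rr-lower bounds of a pair of bounded functions are automatically bounded (which handles the forward direction), and composition with a bounded homeomorphism $\R\to(-1,1)$ is compatible with the rr-order in both directions (which handles the backward direction by reducing to the bounded case).

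For the forward direction, assume $X$ is \rr-good and fix $f,g\in\cxs$. The key remark is that if $k\in\cx$ satisfies $k\rro f$, then from $k^2=kf$ one obtains $k(x)\in\{0,f(x)\}$ for each $x\in X$, whence $|k(x)|\le\|f\|_\infty$ and $k\in\cxs$. Thus the rr-lower bounds of $f$ and $g$ in $\cx$ and in $\cxs$ coincide, so the infimum $h=f\wrr g$ guaranteed in $\cx$ lies in $\cxs$ and is the infimum there as well.

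For the converse, assume $\cxs$ is \rr-good and fix $f,g\in\cx$. Set $\sigma(t)=t/(1+|t|)$, a homeomorphism $\R\to(-1,1)$ with $\sigma(0)=0$; composition with $\sigma$ sends $\cx$ into $\cxs$. Because $\sigma$ is injective and vanishes only at $0$, one checks directly that $k\rro f$ in $\cx$ is equivalent, pointwise, to $\sigma\circ k\rro\sigma\circ f$ in $\cxs$ (both assertions say exactly $k(x)\in\{0,f(x)\}$ for every $x$). Let $\tilde h=(\sigma\circ f)\wrr(\sigma\circ g)$ in $\cxs$. On $\coz\tilde h$ one has $\tilde h=\sigma\circ f\in(-1,1)$, while $\tilde h=0$ elsewhere, so $\tilde h$ takes values in $(-1,1)$ and $h:=\sigma\inv\circ\tilde h$ is a well-defined element of $\cx$ equal to $f$ on $\coz\tilde h$ and $0$ elsewhere. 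The order-equivalence above immediately gives $h\rro f$ and $h\rro g$; conversely, if $k\in\cx$ satisfies $k\rro f$ and $k\rro g$, then $\sigma\circ k\rro\sigma\circ f$ and $\sigma\circ k\rro\sigma\circ g$, so $\sigma\circ k\rro\tilde h$, and injectivity of $\sigma$ returns $k\rro h$. Hence $h=f\wrr g$ in $\cx$ and $X$ is \rr-good.

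The main subtlety is the backward direction's need to transfer the problem from $\cx$ to $\cxs$ across a nonlinear rescaling; the resolution is the observation that the condition $k\rro f$ is purely set-theoretic (each value of $k$ is either $0$ or the matching value of $f$) and is therefore preserved and reflected by composition with any injective function fixing $0$.
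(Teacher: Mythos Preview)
Your proof is correct. The forward direction matches the paper's (both note that an \rr-lower bound of a bounded function is bounded). The backward direction, however, is genuinely different in method.

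The paper stays entirely within the ring structure: given $f,g\in\cx$ it divides both by the same unit $(1+f^2)(1+g^2)$ to obtain bounded functions $f_{bb},g_{bb}\in\cxs$, computes $h=f_{bb}\wrr g_{bb}$ there, and then multiplies back, invoking Proposition~\ref{lsl}(i) (multiplication by a unit preserves and reflects $\rro$) to conclude that $h(1+f^2)(1+g^2)=f\wrr g$. Your approach instead steps outside the ring operations: you post-compose with a homeomorphism $\sigma\colon\R\to(-1,1)$ fixing $0$, relying on the observation that $k\rro f$ is the purely pointwise condition $k(x)\in\{0,f(x)\}$, which any injection fixing $0$ preserves and reflects. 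The paper's route reuses the algebraic machinery already in place and would adapt to other contexts where $\rro$ is not so transparently pointwise; your route is arguably more direct for $\cx$ specifically and makes explicit \emph{why} nothing is lost in passing to $\cxs$, namely that the \rr-order on real-valued functions is invariant under any order-irrelevant rescaling of the target.
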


\begin{proof}  If $X$ is \rr-good and $f, g\in \cxs$ and $h = f\wrr g$, calculated in $\cx$, then $h\in \cxs$. 

In the other direction, let $f,g\in \cx$. Put $f_b=f/(1+f^2)$ and $g_b= g/(1+g^2)$.  Then $f_b, g_b\in \cxs$, as are $f_{bb}= f_b/(1+g^2)$ and $g_{bb}=g_b/(1+f^2)$. Let $h= f_{bb} \wrr g_{bb}$. Then $h^2= hf_{bb} = hg_{bb}$.  Hence, $h^2(1+f^2)^2(1+g^2)^2 = h(1+f^2)(1+g^2)f = h(1+f^2)(1+g^2)g$ showing that $h(1+f^2)(1+g^2) $ is a lower bound for $f$ and $g$. Let $k\in \cx$ be a lower bound for $f$ and $g$. Then, $k^2= kf=kg$ and, hence, $k/(1+f^2)(1+g^2)$ is a lower bound for $f_{bb}$ and $g_{bb}$.  It follows that $k/(1+f^2)(1+g^2)\in \cxs$ and $k/(1+f^2)(1+g^2) \rro h$. Then, by Proposition~\ref{lsl}~(i), $k\rro h(1+f^2)(1+g^2)$ showing that $h(1+f^2)(1+g^2) = f\wrr g$ in $\cx$. \end{proof}

Since for any space $X$, $C^*(X)\cong C(\beta X)$, where $\beta X$ is the \v{C}ech-Stone compactification of $X$, Proposition~\ref{c*x} shows that if $X$ is \rr-good so is $\beta X$. It is not hard to show that if $X$ is \rr-good so is any space $V$, $X\sbq V\sbq \beta X$. Moreover, since for any space $Y$ between $X$ and its realcompactification $\upsilon X$, $C(Y) = \cx$, if $X$ is \rr-good so is $V$. (See \cite[Chapter~8]{GJ}.)

It is next shown that cozero sets in \rr-good spaces are also \rr-good, for example, any open set in an \rr-good metric space.

\begin{prop} \label{cozgood}  Let $X$ be a completely regular topological space which is \rr-good. Let $Y = \coz k$ where $k\in \cx$. 
The space $Y$ is \rr-good.

\end{prop}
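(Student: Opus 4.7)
The plan is to reduce to bounded functions on $Y$ and transfer the infimum question into $\cx$ by weighting with $k^2$. By Proposition~\ref{c*x} it suffices to prove that $\text{C}^*(Y)$ is \rr-good, so fix $f,g\in\text{C}^*(Y)$. Define $F,G\colon X\to\R$ by $F(x)=k(x)^2f(x)$ and $G(x)=k(x)^2g(x)$ on $Y=\coz k$, and $F(x)=G(x)=0$ on $X\setminus Y$. Since $f,g$ are bounded and $k(x)\to 0$ along any net in $Y$ approaching a point of $\bd Y\sbq\z(k)$, both $F$ and $G$ are continuous on all of $X$, hence lie in $\cx$.

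I claim the assignment $h\mapsto H$, where $H=k^2h$ on $Y$ and $H=0$ off $Y$, is an order-preserving bijection between the \rr-lower bounds of $\{f,g\}$ in $\text{C}^*(Y)$ and the \rr-lower bounds of $\{F,G\}$ in $\cx$. The forward direction is routine: the identities $h^2=hf$ and $h^2=hg$ on $Y$ multiply up to $H^2=HF$ and $H^2=HG$ on $X$ (both sides vanishing off $Y$), and boundedness of $h$ transfers to continuity of $H$ at $\bd Y$ exactly as for $F$. For the reverse direction, any \rr-lower bound $H\in\cx$ of $\{F,G\}$ satisfies $\coz H\sbq\coz F\cap\coz G\sbq Y$, so $H$ vanishes outside $Y$; setting $h:=H/k^2$ on $Y$ produces a continuous function there. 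On $\coz H$ the identities $H=F=G$ force $h=f=g$, and $h=0$ elsewhere, so $h$ is bounded (by the bound on $f$) and lies in $\text{C}^*(Y)$. Dividing $H^2=HF=HG$ by $k^4>0$ on $Y$ yields $h^2=hf=hg$, and order preservation in either direction is verified the same way.

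Since $X$ is \rr-good, $F\wrr G$ exists in $\cx$, and the corresponding element of $\text{C}^*(Y)$ under the inverse bijection is the maximum \rr-lower bound of $f,g$; that is, $f\wrr g$ exists in $\text{C}^*(Y)$. Hence $\text{C}^*(Y)$ is \rr-good and Proposition~\ref{c*x} concludes the proof. The main subtlety is that a bounded function on $Y$ need not extend continuously to $\cx$ by zero, but multiplying by $k^2$ forces the extension to die on $\bd Y$ while preserving the \rr-lattice data verbatim; once the extension is in hand, the \rr-goodness of $X$ does the work.
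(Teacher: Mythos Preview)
Your argument is correct and follows essentially the same route as the paper: reduce to $\text{C}^*(Y)$ via Proposition~\ref{c*x}, push bounded functions on $Y$ into $\cx$ by multiplying with a power of $k$ and extending by zero, compute the \rr-infimum in $\cx$, and divide back. The paper uses $k$ (after first replacing $k$ by a bounded function with the same cozero set) rather than $k^2$, and verifies directly that $H|_Y/k$ is the infimum; your framing of the transfer as an explicit order-isomorphism between the two sets of \rr-lower bounds is a mild streamlining of the same computation.
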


\begin{proof} First note that $k$ may be assumed to be bounded. If $f\in \text{C}^*(V)$ then $fk$ can be extended to $\tilde{f}\in \cx$ by defining $\tilde{f}(x)=0$ if $x\in X\setminus Y$ (see \cite[Proposition~1.1]{BH}). Notice also that an \rr-lower bound of two bounded functions is necessarily bounded. 

It will first be shown that $\text{C}^*(Y)$ is an \rr-lower semi-lattice; the result for $\text{C}(Y)$ follows from Proposition~\ref{c*x}. Consider $f,g\in \text{C}^*(V)$. Extend $fk$ and $gk$ as above to get $\tilde{f}$ and $\tilde{g}$ in $\cx$. Let $H = \tilde{f}\wrr \tilde{g}$. Then, in particular, $H^2= \tilde{f}H= \tilde{g}H$. Set $h= H\res _V$ showing that $h^2= fkh= gkh$. Consider $1/k\in \text{C}(Y)$. Then, $(h/k)^2 = f(h/k) = g(h/k)$. Thus $h/k$ is a lower bound for $f$ and $g$. Since $f,g\in \text{C}^*(V)$, it follows that $h/k\in \text{C}^*(V)$. 

Suppose next  that $a$ is also a lower bound for $f$ and $g$. Then, $ak \rro fk $ and $ak\rro gk$. Consider $\tilde{a}$. It follows that $(\tilde{a})^2 = \tilde{a}\tilde{f} = \tilde{a}\tilde{g}$ since the equality holds on $Y$ and all the functions are 0 on $X\setminus Y$. Therefore, $\tilde{a}\le H$ and $\tilde{a}\res_Y$ is a lower bound of $fk$ and $gk$, showing that $ak\rro h$. Then, upon multiplication by $1/k$, $a\le h/k$. Thus, $h/k = f\wrr g$. 
\end{proof}

It is not true that the restriction homomorphism $\cx \to \text{C}(Y)$ in Proposition~\ref{cozgood} preserves $\wrr$. 

\begin{ex} \label{cozbad} Let $X=\R$ and $Y = (0,1)$, a cozero set, and $\rho\colon \cx\to \text{C}(Y)$. Then, $\rho$ does not preserve \rr-infima.\end{ex}

\begin{proof} Consider two function $f$ and $g$ in $\cx$ which are never 0 except at $x=1/2$ and only coincide on $[0,1/2]$. Then, $f\wrr g=0$ in $\cx$ but $\rho (f)\wrr \rho (g) = h\ne 0$ where $h$ coincides with $f$ and $g$ on $(0,1/2)$ and is 0 on $[1/2,1)$. \end{proof}

In the context of $\cx$ it is possible to construct other examples of ring surjections where countable \rr-orthogonal sets lift to \rr-orthogonal sets.  (See also Theorem~\ref{liftwb}.)  A subspace $V$ of $X$ is C-embedded if every element of $\text{C}(V)$ extends to an element of $\cx$, i.e., the restriction map $\rho\colon \cx \to \text{C}(V)$ is surjective. 

\begin{prop}\label{liftcx} Let $X$ be an \rr-good space, $V$ a C-embedded subspace and $\rho\colon \cx \to \text{C}(V)$ the restriction homomorphism.  If $\{f_n\}_{n\in \N}$ is an \rr-orthogonal set in $\text{C}(V)$ then there is an \rr-orthogonal set $\{F_n\}_{n\in \N}$ in $\cx$ such that for all $n\in \N$, $\rho(F_n) =f_n$. \end{prop}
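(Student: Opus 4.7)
My plan is to mimic the inductive construction of Theorem~\ref{liftwb}, replacing the idempotent-based adjustment with a "subtraction of the \rr-infimum" step that works in any \rr-good ring. Since $V$ is C-embedded, an arbitrary lift $F_1$ of $f_1$ exists, and I would suppose inductively that $F_1,\ldots,F_n \in \cx$ have already been built, \rr-orthogonal and lifting $f_1,\ldots,f_n$. Given any lift $U$ of $f_{n+1}$, define a finite chain by $U^{(0)}=U$ and $U^{(i)} := U^{(i-1)} - H_i$ for $i=1,\ldots,n$, where $H_i := U^{(i-1)} \wrr F_i$ (which exists because $X$ is \rr-good).

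The first observation is that $\rho(H_i)=0$, so each subtraction leaves the restriction to $V$ unchanged: by Proposition~\ref{lsl}(iii), $\rho$ preserves the \rr-order, so $\rho(H_i)$ is an \rr-lower bound of $\rho(U^{(i-1)})=f_{n+1}$ and $\rho(F_i)=f_i$, which are \rr-orthogonal in $\text{C}(V)$. Hence $H_i$ vanishes on $V$ and $\rho(U^{(n)}) = f_{n+1}$. The structural fact to carry through the induction is that $H_i \rro U^{(i-1)}$ forces $H_i(H_i - U^{(i-1)})=0$, so on $\coz H_i$ one has $H_i = U^{(i-1)}$; consequently $U^{(i)}$ vanishes on $\coz H_i$ and equals $U^{(i-1)}$ on $\z(H_i)$, yielding $\coz U^{(i)} = \coz U^{(i-1)} \cap \z(H_i)$ with $U^{(i)} = U^{(i-1)}$ on $\coz U^{(i)}$. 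Iterating gives $\coz U^{(n)} \subseteq \coz U^{(j)}$ and $U^{(n)} = U^{(j)}$ on $\coz U^{(n)}$ for every $j \le n$.

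Setting $F_{n+1} := U^{(n)}$, I would verify \rr-orthogonality with each $F_j$. If $c \in \cx$ is a common \rr-lower bound of $F_{n+1}$ and $F_j$, then on $\coz c$ the function $c$ equals both $U^{(n)}$ and $F_j$ and is nonzero; hence $\coz c \subseteq \coz U^{(n)} \subseteq \coz U^{(j)} \subseteq \z(H_j)$ and $c = U^{(j-1)} = F_j$ there, so $c \rro U^{(j-1)}$ and $c \rro F_j$. This forces $c \rro H_j$, but $H_j = 0$ on $\z(H_j) \supseteq \coz c$, giving $c=0$, a contradiction. The main difficulty to anticipate is exactly this compatibility between successive substeps: each subtraction must not undo a previously arranged orthogonality, and that is secured by the monotonicity of the $\coz U^{(i)}$ together with the agreement $U^{(i)} = U^{(i-1)}$ on $\coz U^{(i)}$. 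The outer induction on $n$ then produces the whole sequence $\{F_n\}_{n \in \N}$.
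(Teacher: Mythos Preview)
Your argument is correct and takes a genuinely different route from the paper's proof. Both proceed by induction on $n$ and begin with an arbitrary lift of $f_{n+1}$, but the modifications diverge. The paper computes $U_i=\coz(G\wrr F_i)$ for the \emph{original} lift $G$ and all $i$ simultaneously, proves (via a boundary/connectedness argument) that the $U_i$ are pairwise disjoint, then defines $G'$ by setting $G$ to zero on $\bigcup_i\ov{U_i}$; continuity of $G'$ must be checked separately, and orthogonality with each $F_i$ is established by constructing an auxiliary function $K$. You instead perform \emph{successive} subtractions $U^{(i)}=U^{(i-1)}-H_i$, which keeps continuity automatic (difference of continuous functions), obviates the disjointness argument, and reduces the orthogonality check to the nesting $\coz U^{(n)}\subseteq\coz U^{(j)}\subseteq\z(H_j)$ together with $U^{(n)}=U^{(j-1)}$ on $\coz U^{(n)}$. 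Your approach is more algebraic and in fact works verbatim in any \rr-good reduced ring via the minimal-prime embedding (replacing ``$\coz c$'' by ``the set of minimal primes $\mf p$ with $c_{\mf p}\ne 0$''), whereas the paper's argument leans on closures and boundaries specific to $\cx$.

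One cosmetic point: your final line says ``$c=0$, a contradiction'', but you never assumed $c\ne 0$; simply conclude that every common \rr-lower bound is $0$, which is precisely $F_{n+1}\wrr F_j=0$.
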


\begin{proof} The proof will be by induction. For $F_1$, pick any pre-image of $f_1$.  Let $n\ge 1$ and suppose that an \rr-orthogonal set $F_1,\ldots, F_n$ has been found with, for $1\le i\le n$, $\rho(F_i) = f_i$. Choose $G\in \cx$ to be any pre-image of $f_{n+1}$. 

For $1\le i\le n$, let $U_i=\coz (G\wrr F_i)$. If all the $U_i$ are empty then put $G = F_{n+1}$. In any  case, notice that $U_i\cap V=\vide$ since $G\res_V\wrr F_i\res_V =0$ for $1\le i\le n$.

It will be shown that, in any case, the $U_i$ are pairwise disjoint.  Now suppose that for some $1\le i< j\le n$ that $U_i\cap U_j\ne \vide$.  On $U_i\cap U_j$, all of $G, F_i$ and $F_j$ coincide.  Suppose that  $\bd(U_i\cap U_j)\ne \vide $ and that all three functions are 0 on it. Then let $H$ be such that $H$ coincides with $G, F_i$ and $F_j$ on $U_i\cap U_j$ but is 0 elsewhere. This function is continuous. Moreover, $H\rro F_i$ and $H\rro F_j$.  This is impossible since $H\ne 0$ and $F_i\wrr F_j=0$. If follows that for some $x_0\in \bd (U_i\cap U_j)$ that $G(x_0) = F_i(x_0)= F_j(x_0) \ne 0$ (they coincide by continuity). In this case $x_0\in U_i\cap U_j$.  To see this consider a net $\{\lambda_\alpha\}_{\alpha \in A}$ in $U_i\cap U_j$ which converges to $x_0$. Think of this as a net in $U_i$; since $F_i(x_0)\ne 0$, the limit of the net, $x_0$, is in $U_i$ and not in $\bd U_i$. Similarly, $x_0$ is in $U_j$. Thus, $x_0\in U_i\cap U_j$. This is impossible by the definition of the boundary.  The only remaining possibility is that $\bd (U_i \cap U_j) =\vide$. This means that $U_i\cap U_j$ is clopen and thus the cozero set of an idempotent, say $e\in \cx$. But then $eG$ will be a lower bound for $F_i$ and for $F_j$. This cannot happen unless $e=0$ and then $U_i\cap U_j= \vide$. 

Hence, for each $1\le i< j\le n$, $U_i\cap U_j=\vide$. Notice also that each $U_i\sbq X\setminus V$. Now $G$ can be modified as follows: Define $G'$ by

\begin{equation*}
G'(x) =
\begin{cases}
0, & \text{if}\; x\in \bigcup_{i=1}^n\ov{U_i}\\
G(x), & \text{otherwise}
\end{cases}
\end{equation*}
The function $G$ is zero on each $\bd U_i$ which shows that $G'\in \cx$. Since $G$ and $G'$ coincide on $V$, they have the same image in $\text{C}(V)$. It remains to be shown that $F_i$ and $G'$ are \rr-orthogonal for $i=1, \ldots n$. Suppose, for some $i$, that $H$ is an \rr-lower bound of $F_i$ and of $G'$. Put $W = X\setminus \bigcup_{j=1}^n\ov{U_j}$. Since $G'$ is 0 on $ \bigcup_{j=1}^n\ov{U_i}$, $\coz H\sbq W$. Recall that $\coz (F_i\wrr G) = U_i$. Define a new function $K$ as follows:
\begin{equation*}
K(x) =
\begin{cases}
(F_i\wrr G)(x), & x\in U_i\\
H(x), &x\in \coz H\\
0, &\text{otherwise}
 \end{cases}
\end{equation*}
The function $K$ is continuous and is an \rr-lower bound of $F_i$ and of $G$. However, $F_i\wrr G\rro K$ showing that $K = F_i\wrr G$. But then, $\coz H=\vide$ and $H=0$.

  \end{proof}
  
  As examples where Proposition~\ref{liftcx} can be used, recall that any compact subspace $V$ of $X$ is C-embedded (\cite[6J]{GJ}) and, hence, if $X$ is \rr-good the proposition applies. It is also easy to see that if, in the above, the subspace $V$ is only C$^*$-embedded then countable \rr-orthogonal sets of bounded elements of $\text{C}(V)$ lift to \rr-orthogonal sets in $\cx$.

\bibliographystyle{amsplain}

\end{document}